\pgfplotsset{compat=newest} 
\newtheorem{theorem}{Theorem}
\newtheorem{definition}{Definition}
\newtheorem{property}{Property}
\newtheorem{proof}{Proof}
\newtheorem{lemma}{Lemma}
\newcommand{\lii}[1]{{\mathcal{I}_+^{#1}}}
\newcommand{\rii}[1]{{\mathcal{I}_-^{#1}}}
\newcommand{\ldd}[1]{{\mathcal{D}_+^{#1}}}
\newcommand{\rdd}[1]{{\mathcal{D}_-^{#1}}}
\newcommand{\inner}[2]{\left(#1,#2\right)_\Omega}
\newcommand{\hsss}{\widehat{H}_0^s(\Omega)}
\newcommand{\hilbert}[1]{\widehat{H}_0^{#1}(\Omega)}
\newcommand{\llo}{L^2(\Omega)}
\newcommand{\blinear}[2]{B\left[#1,  #2\right]}
\newcommand{\RN}[1]{%
	\textup{\uppercase\expandafter{\romannumeral#1}}%
}
\DeclareMathOperator\Arg{Arg}
\newcommand{\rev}[1]{{\color{blue} #1}}
\newcommand{\li}[1]{{\mathcal{I}_{a+}^{#1}}}
\newcommand{\ri}[1]{{\mathcal{I}_{b-}^{#1}}}
\newcommand{\ld}[1]{{\mathcal{D}_{a+}^{#1}}}
\newcommand{\rd}[1]{{\mathcal{D}_{b-}^{#1}}}
\newcommand{\LL}{L^2(\Omega)}
\journal{Journal for review}
\begin{document}

\begin{frontmatter}

\title{Spectral analysis of a family of nonsymmetric fractional elliptic operators}
\author[aus]{Quanling Deng}
\ead{Quanling.Deng@anu.edu.au}
\author[unr]{Yulong Li\corref{mycorrespondingauthor}}
\ead{yulongl@unr.edu;liyulong0807101@gmail.com}

\address[aus]{School of Computing, Australian National University, Canberra, ACT 2601, Australia.}
\address[unr]{Department of Mathematics and Statistics, University of Nevada, 
	1664 N. Virginia Street, Reno, NV 89557, USA.}

\cortext[mycorrespondingauthor]{Corresponding author}


\begin{abstract}
In this work, we investigate the spectral problem  
\[
\begin{cases}
	\ld{\alpha}\rd{\beta}u=\lambda u,\quad x\in (a,b)\\
	u(a)=u(b)=0,  1<\alpha+\beta<2,
\end{cases}
\]
where the operators $\ld{\alpha}$ and $\rd{\beta}$ are left- and right-sided Riemann-Liouville derivatives. These operators are nonlocal and nonsymmetric, however, share certain classic elliptic properties. The eigenvalues correspond to the roots of a class of certain special functions. Compared with classic Sturm-Liouville problems, the most challenging part is to set up the framework for analyzing these nonlocal operators, which requires developing new tools. We prove the existence of the real eigenvalues, find the range for all possible complex eigenvalues, explore the graphs of eigenfunctions, and show numerical findings on the distribution of eigenvalues on the complex plane.
\end{abstract}

\begin{keyword}
spectral, nonsymmetric, principal eigenvalue, fractional derivative, mixed derivative.
\end{keyword}

\end{frontmatter}

\section{Introduction and motivation}

In this paper, we focus on the spectral analysis of 
\begin{equation}\label{eq:pde}
	\begin{cases}
		Au=\lambda u  \quad\text{in} \quad \Omega=(a, b),\quad \\
		u|_{\partial \Omega}=0, \\
		(Au)(x):=\ld{\alpha}\rd{\beta}u, \quad 0\leq \alpha, \beta \leq 1, \quad 1<\alpha+\beta<2,
	\end{cases}
\end{equation}
where $(a,b)$ is an arbitrary bounded interval.  
Herein, $\ld{\alpha},\rd{\beta}$ represent standard Riemann-Liouville (R-L) derivatives (see \Cref{def:lrintegralderivative} in \ref{appendix-sobolev}).
Problem \eqref{eq:pde} can be challenging when $\alpha\neq \beta$. 
In recent years, attempts have been made to develop the elliptic theory that is analog to the classic elliptic theory for the boundary value problem
\begin{equation}\label{eq:BVP}
	\begin{cases}
		Au=f  \quad\text{in} \quad \Omega=(a, b),\quad \\
		u|_{\partial \Omega}=0.
	\end{cases}
\end{equation}
It has been shown that \Cref{eq:BVP} maintains certain elliptic properties such as the Maximum principle and Hopf's Lemma (see \cite{MR4414583, YLisubmitted} for details).  However, the corresponding counterpart of spectral results are yet to complete. The complexity becomes obvious if one rewrites the operator $A$ equivalently into a form that contains the singular integral of the Hilbert transform. 

In the general case when $f=f(u,x)$, the problem \eqref{eq:BVP} includes various linear and nonlinear situations, which arise in many fields of science and engineering such as in the viscoelasticity theory, optimal control and modeling.
Such problems have been extensively studied in, for example \cite{MR2351655, MR2491124, MR2332922, MR4151111, MR4036338},
for various problem settings with different boundary and initial conditions.
We also refer to \cite{MR3449652, MR3320682, MR4102344, MR4437921} for other variants of the operator. 
Again, among the investigations, one of the major challenges is to analyze the operator $A$ theoretically.

In this paper, we intend to set up a suitable framework for the analysis of operator $A$ and  study the spectral properties of this class of operators both theoretically and numerically.
From the  usual fractional Sturm-Liouville problem
\begin{equation}\label{eq:auxiliary}
	\begin{cases}
		\tilde{A}u=\lambda u  \quad\text{in} \quad \Omega=(a, b),\quad \\
		u|_{\partial \Omega}=0,\\
		(\tilde{A}u)(x):=D^t u, 1<t<2,
	\end{cases}
\end{equation}
 it is well-known that $\lambda\in \mathbb{C}$ is an eigenvalue of \eqref{eq:auxiliary} if and only if $\lambda$ is a zero of $E_{\alpha,2}[-\lambda]$, when $\tilde{A}$ represents  Caputo derivative (\cite{MR0414982}); and if and only if $\lambda$ is a zero of $E_{\alpha,\alpha}[-\lambda]$, when $\tilde{A}$ represents R-L fractional derivative (\cite{MR4414583}). 
Herein, $E_{\cdot, \cdot}[\cdot]$ is a two-parameter Mittag-Leffler function.
With this in mind, one may expect that the eigenvalues of \eqref{eq:pde} correspond to the zeros of a class of generalized special functions, that is, generalizations of Mittag-Leffler functions.
The simplicity of the eigenvalues of  \eqref{eq:pde} is unknown, and it is not even clear 
whether  \eqref{eq:pde} has finite real (or complex) eigenvalues and 
whether the eigenfunctions have finite oscillation periods. 
In this work, we provide partial results to these open questions. 
We mainly prove that all the possible complex values must be located inside the cone $|\Arg \lambda|\leq \frac{|\beta-\alpha|\pi }{2}$ (\Cref{theorem:range}) and problem \eqref{eq:pde} has at least one real eigenvalue (\Cref{the:theorem-principal}). 
Furthermore, based on theoretical analysis, we perform the finite element analysis to study the graphs of eigenfunctions and the distribution of eigenvalues on the complex plane.
\section{Notation}
We collect all the symbols that will appear in the work: Let $\Omega=(a,b)$ represent a finite interval. All the functions considered in this work are assumed to be complex-valued unless otherwise specified. The $C_0^\infty(\Omega)$ consists of all infinitely differentiable functions with compact support in $\Omega$. $(f,g)_\Omega$ denotes the standard inner product $\int_{\Omega}f\overline{g}\, dx$. Riemann-Liouville integrals $\li{t}, \ri{t}, \lii{t}, \rii{t}$ and Riemann-Liouville derivatives $\ld{t}, \rd{t}, \ldd{t}, \rdd{t}$ are defined as in \Cref{def:lrintegralderivative} in \ref{appendix-sobolev}.

\section{Preliminary on fractional Sobolev space}
The fractional Sobolev space $W^{s,2}(\mathbb{R})=\widehat{H}^s(\mathbb{R})$, $s
\geq 0$ is a useful Hilbert space, which can be defined in different ways with equal or equivalent (semi)norms. For elliptic problems, it provides a natural functional space where weak solutions are sought. Furthermore, the true solutions would be recovered by raising the regularity therein. It turns out that the classical definitions of $\widehat{H}^s(\mathbb{R})$ are not quite fit for the analysis of many types of fractional-order differential equations and therefore need to be suitably tailored to fit the analysis. We shall adopt the following characterization via weak fractional derivatives for our purpose.

\begin{definition}[Weak fractional derivative](\cite{glsima18}, Section 3)\label{Equivalent-definition}
	Assume $ s\geq 0$ and  $u\in L^2(\mathbb{R})$. Then  $u \in \widehat{H}^s(\mathbb{R})$ if and only if there is a unique $\psi_1 \in L^2(\mathbb{R})$  such that 
	\begin{equation}\label{eq:1-definition}
	\int_\mathbb{R} u \cdot \ldd{s} \psi\, dx =\int_\mathbb{R} \psi_1 \cdot \psi\, dx,\quad \forall \psi\in C^\infty_0(\mathbb{R}).
	\end{equation}
	Similarly, 
	$u \in \widehat{H}^s(\mathbb{R})$ if and only if there is a unique $\psi_2\in L^2(\mathbb{R})$  such that 
	\begin{equation}\label{eq:2-definition}
	\int_\mathbb{R} u \cdot \rdd{s} \psi\, dx =\int_\mathbb{R} \psi_2 \cdot \psi\, dx \quad \forall \psi\in C^\infty_0(\mathbb{R}).
	\end{equation}

\end{definition}

Herein, $\psi_1, \psi_2$ are called the weak Riemann-Liouville derivatives of $u$, denoted by $ \rdd{s} u, \ldd{s} u$, respecitively. 
They are understood in the weak sense as in \eqref{eq:1-definition} and \eqref{eq:2-definition}. 

It is well-known that $\widehat{H}^s(\mathbb{R})$ can be defined via Fourier transform, i.e.,
\begin{equation}
	\widehat{H}^s(\mathbb{R}) = \left \{ w \in L^2(\mathbb{R}) : \int_{\mathbb{R}} (1 + |2\pi\xi|^{2s}) |\widehat{w}(\xi) |^2 \, {\rm d} \xi < \infty \right \},
\end{equation}
where $\widehat{w}$ is Fourier transform and
\[
\|w\|_{\widehat{H}^s (\mathbb{R})}:=\left(\|w\|^2_{L^2(\mathbb{R})} +|w|^2_{\widehat{H}^s(\mathbb{R})}\right)^{1/2}, ~~\text{with}~~
|w|_{\widehat{H}^s(\mathbb{R})}:=\|(2\pi\xi)^s \widehat{w}\|_{L^2(\mathbb{R})}.
\]

The following property implies that, with the weak R-L derivatives, one can define the norm and seminorm that are equivalent to the ones defined via the Fourier transform.
\begin{property}(\cite{glsima18}, Section 3)\label{weak-equivalence-norm}
	Assume $u\in \widehat{H}^s(\mathbb{R})$, $s\geq0$, then 
	\begin{equation}
	|u|_{\widehat{H}^s(\mathbb{R})} = \|\ldd{s} u\|_{L^2(\mathbb{R})}=\|\rdd{s} u\|_{L^2(\mathbb{R})},
	\end{equation}
	where $\rdd{s} u$ and $\ldd{s} u$ are understood in the weak sense.
\end{property}

We use in the analysis below these equivalent seminorms and norms to analyze equations involving Riemann-Liouville derivative operators.

Since  $C_0^\infty(\Omega)$ is dense in $\widehat{H}^s(\mathbb{R})$, we can define the following analog by restricting to the bounded domain. This is the main functional space that we are going to adopt in this work.
\begin{definition}\label{def:ForInterval}
	Given $s\geq 0$.
	\begin{equation}
	\widehat{H}^s_0(\Omega):=\{\text{Closure of}\, \, u\in C_0^\infty(\Omega) \, \text{with respect to norm}\, \|\tilde{u}\|_{\widehat{H}^s(\mathbb{R})} \},
	\end{equation}
	where  $\tilde{u}$ represents the trivial continuation of $u(x)$ by $0$ beyond $\Omega$. It is endowed with semi-norm and norm
	\[
	|u|_{\widehat{H}^s_0(\Omega)}:=|\tilde{u}|_{\widehat{H}^s(\mathbb{R})},
	\|u\|_{\widehat{H}^s_0(\Omega)}:=\|\tilde{u}\|_{\widehat{H}^s(\mathbb{R})}.
	\]
\end{definition}
As a subspace of $\widehat{H}^s(\mathbb{R})$, also a  Hilbert space, $\hsss$ can be characterized by the following.
\begin{property}[Theorem 1.4.2.2, p. 24, \cite{MR775683}]\label{pro:subspace}
	\[
	\hsss=\{u\in \widehat{H}^s(\mathbb{R}): u\equiv 0 \quad \text{on}\quad \mathbb{R}\backslash \Omega\}.
	\]
\end{property}
%
%
%
With \Cref{Equivalent-definition} and \Cref{def:ForInterval}, we can characterize the integral representation for elements of $\hilbert{s}$, which will be utilized in our subsequent analysis.
\begin{property}\label{pro:alternate-form}
	Given $0<s<1$. $u\in \widehat{H}^s_0(\Omega)$ can be represented as 
	\begin{equation}\label{eq:representation-left-right}
	u(x)=\ri{s}\psi_1=\li{s}\psi_2,\quad x\in \Omega
	\end{equation}
	for certain   $\psi_1$, $\psi_2 \in L^2(\Omega)$. As a consequence, the R-L fractional derivatives of elements of $\hsss$, $\ld{s}u$ and $\rd{s}u$,  exist a.e. and coincide with $\psi_1$, $\psi_2$, respectively.
\end{property}

\begin{proof}
 For the case $1/2<s<1$, \eqref{eq:representation-left-right} has been justified in previous work  \cite{yulongginting}. In the same spirit, we justify \Cref{pro:alternate-form} for the case $0<s\leq 1/2$ only.
 
Since $u\in \hilbert{s}$, according to \Cref{pro:subspace}, its extension $\widetilde{u}$ belongs to $\widehat{H}^s(\mathbb{R})$. By \Cref{Equivalent-definition}, there exist $\psi_1,\psi_2\in L^2(\mathbb{R})$ such that
	 \begin{equation}\label{eq:deduceddef}
	 	\int_\mathbb{R} \widetilde{ u}\cdot \ldd{s}v\, dx=\int_\mathbb{R} \psi_1 \cdot v \, dx \quad \text{and}\quad \int_\mathbb{R} \widetilde{ u}\cdot \rdd{s}v\, dx=\int_\mathbb{R} \psi_2\cdot v \, dx,\quad \forall v\in C^\infty_0(\mathbb{R}).
	 \end{equation}
	 Since $\widetilde{u} = u$ in $\Omega$ and $\widetilde{u}=0$ in $\mathbb{R}\backslash\Omega$, \eqref{eq:deduceddef} yields
	 \begin{equation}
	 	\int_\Omega u\cdot \ld{s}v\, dx=\int_\Omega \psi_1 \cdot v \, dx\quad \text{and}\quad \int_\Omega u\cdot \rd{s}v\, dx=\int_\Omega\psi_2\cdot v\, dx,\quad \forall v\in C^\infty_0(\Omega).
	 \end{equation}
	 Interchanging the differentiation and integration gives
	 \[
	 \int_\Omega u\cdot \li{1-s}Dv\, dx=\int_\Omega \psi_1 \cdot v \, dx ~\text{and}~ -\int_\Omega u\cdot \ri{1-s}Dv\, dx=\int_\Omega\psi_2\cdot v\, dx,~\forall v\in C^\infty_0(\Omega).
	 \]
	 
	 We use  fractional integration by parts (eq. (2.20), p. 34, \cite{MR1347689}) to arrive at
	 \begin{equation*}
	 	\int_\Omega (\ri{1-s}u- \ri{1}\psi_1 )Dv\, dx=0 ~\text{and}~ \int_\Omega(- \li{1-s}u+\li{1}\psi_2) Dv\, dx=0,~\forall v\in C^\infty_0(\Omega).
	 \end{equation*}
	 Since $v$ is arbitrary, this further implies  that (Lemma 8.1, p. 204, \cite{MR2759829})
	 \begin{equation*}
	 	\ri{1-s}u=\ri{1}\psi_1+c_1 \quad \text{and} \quad \li{1-s}u=\li{1}\psi_2+c_2, \, \text{ a.e. in } \Omega, 
	 \end{equation*}
	 for certain constants $c_1$ and $c_2$. Taking the left and right Riemann-Liouville derivatives on both sides of these last equations gives
	 \begin{equation*}
	 	u(x)=\ri{s}\psi_1+c_1(b-x)^{-(1-s)}\quad \text{and }\quad u(x)=\li{s}\psi_2+c_2(x-a)^{-(1-s)}, \, \text{a.e. in } \Omega.
	 \end{equation*}

Notice that $0<s\leq 1/2$ and $u\in \LL$, hence, $c_1$ and $c_2$ have to be zero. It follows that
	 \begin{equation}\label{eq:constantsgone}
	 	u(x)=\ri{s}\psi_1=\li{s}\psi_2, \,a.e.\quad x\in \Omega,
	 \end{equation}
	 which is the desired result.
\end{proof}

We now establish the following semi-norm equivalence relationship.
\begin{lemma}\label{lemma:bound-1}
	Suppose $0<t<1$, $t\neq 1/2$ and $u\in \hilbert{t}$. Let $\tilde{u}$ denote the extension of $u$ by zero outside $\Omega$, then
	\begin{equation}\label{eq:bound-1}
			|\cos(t\pi)|	\|\ld{t}u\|_{\LL}\leq 	\|\rd{t}u\|_{\LL}\leq  \|\rdd{t}\tilde{u}\|_{L^2(\mathbb{R})} \leq \frac{1}{|\cos(t\pi)|} 	\|\ld{t}u\|_{\LL}
	\end{equation}
and
	\begin{equation}\label{eq:bound-1-2}
	|\cos(t\pi)|	\|\rd{t}u\|_{\LL}\leq 	\|\ld{t}u\|_{\LL}\leq  \|\ldd{t}\tilde{u}\|_{L^2(\mathbb{R})} \leq \frac{1}{|\cos(t\pi)|} 	\|\rd{t}u\|_{\LL}.
	\end{equation}
\end{lemma}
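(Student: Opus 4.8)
The plan is to push everything onto the whole line, where \Cref{weak-equivalence-norm} already equates the two seminorms of $\tilde u$, and to extract the $\cos(t\pi)$ factors from a single Fourier-multiplier identity combined with the support structure of the extended derivatives. Throughout set $P:=\ldd{t}\tilde u$ and $Q:=\rdd{t}\tilde u$; both lie in $L^2(\mathbb R)$ since $\tilde u\in\widehat H^t(\mathbb R)$ by \Cref{pro:subspace}, and their restrictions to $\Omega$ coincide with the interval derivatives $\ld{t}u,\rd{t}u\in\LL$ furnished by \Cref{pro:alternate-form}. First I would record the support structure: because $\tilde u$ vanishes on $(-\infty,a)$, the convolution defining $P$ sees no mass there, so $P\equiv0$ on $(-\infty,a)$ and $P=\ld{t}u$ on $\Omega$; symmetrically $Q\equiv0$ on $(b,\infty)$ and $Q=\rd{t}u$ on $\Omega$. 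This gives the orthogonal splittings
\[
\|P\|_{L^2(\mathbb R)}^2=\|\ld{t}u\|_{\LL}^2+\|P\|_{L^2(b,\infty)}^2,\qquad \|Q\|_{L^2(\mathbb R)}^2=\|\rd{t}u\|_{\LL}^2+\|Q\|_{L^2(-\infty,a)}^2 ,
\]
from which the two middle inequalities $\|\rd{t}u\|_{\LL}\le\|\rdd{t}\tilde u\|_{L^2(\mathbb R)}$ and $\|\ld{t}u\|_{\LL}\le\|\ldd{t}\tilde u\|_{L^2(\mathbb R)}$ follow immediately, while \Cref{weak-equivalence-norm} supplies the equality $\|P\|_{L^2(\mathbb R)}=\|Q\|_{L^2(\mathbb R)}$.

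The crux is an operator identity on $\widehat H^t(\mathbb R)$. Since the right integral inverts the right derivative there, one has $\tilde u=\rii{t}Q$, hence $P=\ldd{t}\rii{t}Q$, and the Fourier symbol of $\ldd{t}\rii{t}$ is $(i\xi)^{t}(-i\xi)^{-t}=e^{i\pi t\,\mathrm{sgn}\,\xi}=\cos(t\pi)+\sin(t\pi)\,(i\,\mathrm{sgn}\,\xi)$. Thus $P=\cos(t\pi)Q+\sin(t\pi)\,TQ$, where $T$ is the unitary multiplier with symbol $i\,\mathrm{sgn}\,\xi$, so that $\|TQ\|_{L^2(\mathbb R)}=\|Q\|_{L^2(\mathbb R)}$. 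The mirror identity $\rdd{t}\lii{t}=\cos(t\pi)I+\sin(t\pi)T'$ with $T'$ unitary holds by the same symbol computation, using $\tilde u=\lii{t}P$.

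Third, I would combine the identity with the support facts to bound the tails. On $(b,\infty)$ we have $Q=0$, so $P=\sin(t\pi)\,TQ$ there, giving
\[
\|P\|_{L^2(b,\infty)}=|\sin(t\pi)|\,\|TQ\|_{L^2(b,\infty)}\le|\sin(t\pi)|\,\|Q\|_{L^2(\mathbb R)}=|\sin(t\pi)|\,\|P\|_{L^2(\mathbb R)} .
\]
Subtracting this from the first splitting yields $\|\ld{t}u\|_{\LL}^2=\|P\|_{L^2(\mathbb R)}^2-\|P\|_{L^2(b,\infty)}^2\ge\cos^2(t\pi)\,\|P\|_{L^2(\mathbb R)}^2$, and dividing by $\cos^2(t\pi)\ne0$ (here $t\ne1/2$ is used) gives $\|\rdd{t}\tilde u\|_{L^2(\mathbb R)}=\|Q\|_{L^2(\mathbb R)}=\|P\|_{L^2(\mathbb R)}\le\frac{1}{|\cos(t\pi)|}\|\ld{t}u\|_{\LL}$, the rightmost estimate of \eqref{eq:bound-1}. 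Running the mirror argument with $\tilde u=\lii{t}P$ bounds $\|Q\|_{L^2(-\infty,a)}\le|\sin(t\pi)|\,\|Q\|_{L^2(\mathbb R)}$, so $\|\rd{t}u\|_{\LL}^2\ge\cos^2(t\pi)\,\|Q\|_{L^2(\mathbb R)}^2$ and therefore $\|\rd{t}u\|_{\LL}\ge|\cos(t\pi)|\,\|P\|_{L^2(\mathbb R)}\ge|\cos(t\pi)|\,\|\ld{t}u\|_{\LL}$, the leftmost estimate. This completes \eqref{eq:bound-1}; the twin chain \eqref{eq:bound-1-2} is \eqref{eq:bound-1} with the roles of the two endpoints interchanged, so the reflection $x\mapsto a+b-x$ (which swaps $\ld{t}\leftrightarrow\rd{t}$ and $\ldd{t}\leftrightarrow\rdd{t}$) delivers it verbatim.

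The hard part is the second paragraph, namely making the multiplier identity fully rigorous: the powers $(i\xi)^{t}$ and $(-i\xi)^{-t}$ must be taken on a consistent branch, the compositions $\ldd{t}\rii{t}$ and $\rdd{t}\lii{t}$ must be shown to be well defined and to act as multipliers on all of $\widehat H^t(\mathbb R)$ rather than merely on $C_0^\infty$, and the inversions $\rii{t}\rdd{t}=I$, $\lii{t}\ldd{t}=I$ on this space must be justified. Once these mapping properties are secured, every remaining step is an elementary Pythagorean split, and the hypothesis $t\ne1/2$ is needed only to ensure $\cos(t\pi)\ne0$ so that the final divisions are legitimate.
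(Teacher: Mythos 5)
Your proof is correct, and it takes a genuinely different route from the paper's. The paper first reduces to real-valued $u$, then invokes the known quadratic-form identity $\cos(t\pi)\|\ldd{t}\tilde u\|^2_{L^2(\mathbb{R})}=\cos(t\pi)\|\rdd{t}\tilde u\|^2_{L^2(\mathbb{R})}=\int_\Omega\ld{t}u\cdot\rd{t}u\,dx$ (cited from Theorem 4.1 of the reference behind \Cref{weak-equivalence-norm}), and extracts both outer bounds of \eqref{eq:bound-1} via H\"older plus the Peter--Paul inequality with the specific choice $\epsilon=|\cos(t\pi)|/2$; the middle inequality is used silently there, exactly the fact your Pythagorean splitting makes explicit. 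You instead work with the functions themselves: the frequency-side phase relation $\widehat{P}=e^{i\pi t\,\mathrm{sgn}\xi}\,\widehat{Q}$ between your $P=\ldd{t}\tilde u$ and $Q=\rdd{t}\tilde u$, the support structure of the two extended derivatives, and the unitarity of the $i\,\mathrm{sgn}\xi$ multiplier. The two arguments rest on the same underlying Fourier fact --- integrating your identity against $\overline{\widehat{Q}}$ and taking real parts recovers precisely the paper's quoted identity, since the support facts give $\int_{\mathbb{R}}P\,Q\,dx=\int_\Omega\ld{t}u\cdot\rd{t}u\,dx$ for real $u$ --- but your version buys two things: it treats complex-valued $u$ directly, with no reduction to real and imaginary parts (nothing in your argument needs conjugation symmetry, whereas the paper's identity as stated requires real-valuedness), and it produces the quantitative tail bound $\|P\|_{L^2(b,\infty)}\le|\sin(t\pi)|\,\|P\|_{L^2(\mathbb{R})}$, which is sharper information than the lemma asserts. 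The paper's proof is shorter only because the analytic core is outsourced to the cited identity.

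One suggestion on the step you flag as the hard part: you do not need the inversion $\tilde u=\rii{t}Q$ at all, which is awkward to justify because $\rii{t}$ is unbounded on $L^2(\mathbb{R})$ (its symbol blows up at $\xi=0$). For $\tilde u\in\widehat H^t(\mathbb{R})$ the weak derivatives are characterized on the Fourier side by $\widehat{P}=(2\pi i\xi)^t\widehat{\tilde u}$ and $\widehat{Q}=(-2\pi i\xi)^t\widehat{\tilde u}$, with the principal-branch convention $(\mp i\xi)^{\sigma}=|\xi|^{\sigma}e^{\mp\sigma\pi i\,\mathrm{sgn}(\xi)/2}$ --- this is exactly the representation the paper itself deploys in the proof of \Cref{theorem:range} --- and your multiplier identity then follows by dividing the two symbols pointwise, legitimate a.e.\ since they vanish only at $\xi=0$. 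With that rephrasing, every step of your argument is elementary and fully rigorous.
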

\begin{proof}
To prove \eqref{eq:bound-1} and \eqref{eq:bound-1-2} for complex-valued functions, it suffices to show that \Cref{lemma:bound-1} is valid for all real-valued function $u$. Now suppose that $u$ is an arbitrary real-valued function and $u\in \hilbert{t}$.  Then the identity 
	\begin{equation}\label{eq:wellknwon}
\cos(t\pi)\int_{\mathbb{R}}|\ldd{t}\tilde{u}|^2 \,dx=\cos(t\pi)	\int_{\mathbb{R}}|\rdd{t}\tilde{u}|^2 \,dx=\int_{\Omega} \ld{t}u \cdot \rd{t}u \, dx
	\end{equation}
	holds (\cite{glsima18}, Theorem 4.1).
Using H\"older's inequality and Peter–Paul inequality, we arrive at
	\begin{equation}\label{eq:young}
		\begin{aligned}
		\big|	\int_{\Omega} \ld{t}u\, \rd{t}u \, dx\big |&\leq \frac{1}{4\epsilon}\|\ld{t}u\|_{\LL}^2+\epsilon\|\rd{t}u\|_{\LL}^2\\
			&\leq  \frac{1}{4\epsilon}\|\ld{t}u\|_{\LL}^2+\epsilon\|\rdd{t}\tilde{u}\|_{L^2(\mathbb{R})}^2.
		\end{aligned}
	\end{equation}
By choosing $\epsilon=\frac{|\cos(t\pi)|}{2}$ and applying \eqref{eq:wellknwon} to the left-hand side, \eqref{eq:young} yields
\[
\int_{\mathbb{R}}|\rdd{t}\tilde{u}|^2 \,dx\leq \frac{1}{\cos^2(t\pi)} \|\ld{t}u\|_{\LL}^2.
\]
Hence,
\begin{equation}\label{eq:lf}
\|\rd{t}u\|_{\LL}\leq \|\rdd{t}\tilde{u}\|_{L^2(\mathbb{R})}\leq \frac{1}{|\cos(t\pi)|} \|\ld{t}u\|_{\LL}.
\end{equation}
Likewise, we can directly verify that
\begin{equation}\label{eq:rt}
	\|\ld{t}u\|_{\LL}\leq \frac{1}{|\cos(t\pi)|} \|\rd{t}u\|_{\LL}.
\end{equation}
\eqref{eq:lf} together with \eqref{eq:rt} yields \eqref{eq:bound-1}. \eqref{eq:bound-1-2} can be proved in a similar fashion. 
This completes the proof.
\end{proof}

We also need the following auxiliary lemma, which is subtle but useful for computing the norms of operators with mixed R-L derivatives later.
\begin{lemma}\label{lem:subtle-seminorm}
	Suppose $u\in \hilbert{t}, 0<t<1$. Let $0<\sigma<\min\{1/2, t\}$. Then  $\ld{t-\sigma}u, \rd{t-\sigma}u\subset \hilbert{\sigma}$.
\end{lemma}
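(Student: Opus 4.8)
The plan is to reduce the statement to a mapping property of the Riemann–Liouville integral acting on $\LL$ and then to exploit the hypothesis $\sigma<1/2$ through a cut-off (multiplier) argument. First I would record an explicit integral representation of the two derivatives. Since $u\in\hilbert{t}$ with $0<t<1$, \Cref{pro:alternate-form} (applied with $s=t$) gives $u=\li{t}(\rd{t}u)=\ri{t}(\ld{t}u)$ with $\ld{t}u,\rd{t}u\in\LL$. Because $0<\sigma<t$ we have $t-\sigma\in(0,1)$, so $\ld{t-\sigma}$ and $\rd{t-\sigma}$ are genuine fractional derivatives; applying the Riemann–Liouville index law (\cite{MR1347689}), namely $\ld{t-\sigma}\li{t}=\li{\sigma}$ and $\rd{t-\sigma}\ri{t}=\ri{\sigma}$ on $L^1(\Omega)$, yields
\begin{equation*}
\ld{t-\sigma}u=\li{\sigma}(\rd{t}u),\qquad \rd{t-\sigma}u=\ri{\sigma}(\ld{t}u).
\end{equation*}
Thus the lemma reduces to the following claim: if $g\in\LL$ and $0<\sigma<1/2$, then $\li{\sigma}g\in\hilbert{\sigma}$ and, symmetrically, $\ri{\sigma}g\in\hilbert{\sigma}$.

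To prove the claim I would invoke the subspace characterization of \Cref{pro:subspace}: it suffices to show that the zero-extension of $\li{\sigma}g$ lies in $\widehat{H}^{\sigma}(\mathbb{R})$, since it vanishes outside $\Omega$ by construction. Writing $\tilde g$ for the zero-extension of $g$, one checks directly from the integral formulas that the zero-extension of $\li{\sigma}g$ equals $\mathbf{1}_\Omega\cdot\lii{\sigma}\tilde g$: indeed $\lii{\sigma}\tilde g(x)=\li{\sigma}g(x)$ for $x\in\Omega$ because $\tilde g$ vanishes on $(-\infty,a)$. A Fourier computation then shows $\lii{\sigma}\tilde g\in\widehat{H}^{\sigma}(\mathbb{R})$: the symbol of $\lii{\sigma}$ has modulus $|2\pi\xi|^{-\sigma}$, so $\|\,|2\pi\xi|^{\sigma}\widehat{\lii{\sigma}\tilde g}\,\|_{L^2(\mathbb{R})}=\|\tilde g\|_{L^2(\mathbb{R})}$ is finite, while the $L^2(\mathbb{R})$-membership of $\lii{\sigma}\tilde g$ follows from the integrability of $|2\pi\xi|^{-2\sigma}$ near $\xi=0$ (using that $\tilde g$ is compactly supported, hence $\widehat{\tilde g}$ is bounded), which holds precisely because $2\sigma<1$.

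The remaining—and principal—difficulty is that $\lii{\sigma}\tilde g$ carries a nonzero tail on $(b,\infty)$, so passing from $\lii{\sigma}\tilde g$ to its truncation $\mathbf{1}_\Omega\,\lii{\sigma}\tilde g$ must not destroy $\widehat{H}^{\sigma}$-regularity. This is exactly the classical fact that the indicator of a bounded interval is a pointwise multiplier on $H^{\sigma}(\mathbb{R})=\widehat{H}^{\sigma}(\mathbb{R})$ when (and only when) $\sigma<1/2$. I would establish it by splitting the Gagliardo seminorm
\begin{equation*}
|\mathbf{1}_\Omega w|_{\widehat{H}^{\sigma}(\mathbb{R})}^2\sim\iint_{\mathbb{R}^2}\frac{|\mathbf{1}_\Omega(x)w(x)-\mathbf{1}_\Omega(y)w(y)|^2}{|x-y|^{1+2\sigma}}\,dx\,dy
\end{equation*}
into the regions where both points lie in $\Omega$, both lie outside $\Omega$, and one lies in each; the only genuinely new contribution is the mixed region, which reduces to $\int_\Omega |w(x)|^2\,\mathrm{dist}(x,\partial\Omega)^{-2\sigma}\,dx$ and is controlled by a Hardy inequality valid exactly for $2\sigma<1$. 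With $w=\lii{\sigma}\tilde g$ this gives $\mathbf{1}_\Omega\,\lii{\sigma}\tilde g\in\widehat{H}^{\sigma}(\mathbb{R})$; since it vanishes off $\Omega$, \Cref{pro:subspace} places it in $\hilbert{\sigma}$, proving the claim and hence $\ld{t-\sigma}u\in\hilbert{\sigma}$. The identical argument applied to $\ri{\sigma}(\ld{t}u)$, using right-sided operators and the tail on $(-\infty,a)$, gives $\rd{t-\sigma}u\in\hilbert{\sigma}$, completing the proof. The hypotheses are used sharply: $\sigma<t$ guarantees $t-\sigma\in(0,1)$ for the index-law step, while $\sigma<1/2$ is essential both for the $L^2$-membership of the full-line integral and for the Hardy/multiplier estimate that removes the tail.
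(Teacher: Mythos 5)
Your proof is correct, and its first half coincides with the paper's own: both start from \Cref{pro:alternate-form} to write $u$ as left- and right-sided fractional integrals of $\LL$ densities and then apply the index law to get $\ld{t-\sigma}u=\li{\sigma}g$ and $\rd{t-\sigma}u=\ri{\sigma}h$ with $g,h\in\LL$. (A cosmetic point: the density in the left-sided representation is $\ld{t}u$ rather than $\rd{t}u$; the paper's own wording of \Cref{pro:alternate-form} carries the same label swap, and nothing in either argument depends on it, since only $L^2$-membership of the density is used.) Where you genuinely diverge is the crux, namely showing $\li{\sigma}g\in\hilbert{\sigma}$ for $g\in\LL$ and $\sigma<1/2$. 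The paper handles this by citing Theorem 13.10 of Samko--Kilbas--Marichev, which asserts precisely that for $\sigma<1/2$ the zero-extension of the interval integral $\li{\sigma}g$ is a whole-line fractional integral $\lii{\sigma}J^*$ with $J^*\in L^2(\mathbb{R})$, and then verifies the weak-derivative \Cref{Equivalent-definition} by fractional integration by parts before concluding via \Cref{pro:subspace}. You instead re-prove the content of that cited theorem from scratch: you identify the zero-extension with $\mathbf{1}_\Omega\cdot\lii{\sigma}\tilde g$, check $\lii{\sigma}\tilde g\in\widehat{H}^\sigma(\mathbb{R})$ by the Fourier symbol (with $2\sigma<1$ supplying low-frequency integrability), and then remove the tail on $(b,\infty)$ using the classical fact that $\mathbf{1}_\Omega$ is a pointwise multiplier on $H^\sigma(\mathbb{R})$ exactly when $\sigma<1/2$, established through the Gagliardo-seminorm splitting and a fractional Hardy inequality. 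Your route is longer but self-contained and makes transparent the two places where $\sigma<1/2$ enters; its cost is the need to invoke the equivalence of the Gagliardo and Fourier seminorms together with the Hardy inequality, both classical but external to the paper's toolkit. The paper's route is shorter and stays entirely within its Riemann--Liouville/weak-derivative framework, at the price of burying the role of $\sigma<1/2$ inside the citation.
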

\begin{proof}
	Suppose $u\in \hilbert{t}$. By \Cref{pro:alternate-form}, $u$ admits integral representations
	\[
	u=\li{t}J_1=\ri{t}J_2,\quad \text{for certain}\quad J_1, J_2\in \LL.
	\]
	Hence, 
	\[
	\ld{t-\sigma}u=\li{\sigma}J_1,\quad \rd{t-\sigma}u=\ri{\sigma}J_2.
	\]
	Then, due to $0<\sigma<1/2$, the extensions of $\ld{t-\sigma}u$ and $\rd{t-\sigma}u$, i.e.
	\[
	\begin{split}
	\widetilde{	\ld{t-\sigma}u}=
	\begin{cases}
			\ld{t-\sigma}u, \quad &x\in \Omega\\
			0, &x\in \mathbb{R}\backslash \Omega
	\end{cases},
\quad
	\widetilde{	\rd{t-\sigma}u}=
\begin{cases}
	\rd{t-\sigma}u, \quad &x\in \Omega\\
	0, &x\in \mathbb{R}\backslash \Omega
\end{cases}
\end{split}
	\]
also admit integral representations (Theorem 13.10, p. 236, \cite{MR1347689})
	\[
	\widetilde{	\ld{t-\sigma}u}=\lii{\sigma}J_1^*,\quad	\widetilde{	\rd{t-\sigma}u}=\rii{\sigma}J_2^*, x\in \mathbb{R},\quad \text{for certain}\quad J_1^*, J_2^*\in L^2(\mathbb{R}).
	\]
This implies that  $\widetilde{	\ld{t-\sigma}u}$ and $\widetilde{	\rd{t-\sigma}u}$ satisfy \Cref{Equivalent-definition}, respectively, i.e.,
\[
	\int_\mathbb{R} \widetilde{	\ld{t-\sigma}u} \cdot \rdd{\sigma} \psi\, dx =\int_\mathbb{R} J_1^* \cdot \psi\, dx \quad \forall \psi\in C^\infty_0(\mathbb{R})
\]
and
\[
	\int_\mathbb{R} \widetilde{	\rd{t-\sigma}u} \cdot \ldd{\sigma} \psi\, dx =\int_\mathbb{R} J_2^* \cdot \psi\, dx,\quad \forall \psi\in C^\infty_0(\mathbb{R})
\]
by fractional integration by parts (eq. (5.17), p. 96, \cite{MR1347689}). Therefore, by definition, we obtain
\[
\widetilde{	\ld{t-\sigma}u},\widetilde{	\rd{t-\sigma}u}\in \widehat{H}^\sigma(\mathbb{R}).
\]
It follows that
\[
\ld{t-\sigma}u,\rd{t-\sigma}u\in \hilbert{\sigma}
\]
by \Cref{pro:subspace}, which completes the proof.
\end{proof}
Now we are ready to present the following property involving mixed fractional derivatives.
\begin{lemma}\label{lemma:Bound}
	Suppose $t_1>0, t_2>0$, $0<t_1+t_2<1$, $0<t_1<1/2$ and $u\in \hilbert{t_1+t_2}$, then
	\begin{equation}\label{eq:bound-2}
	\cos(t_1\pi)	\|\ld{t_1+t_2}u\|_{\LL}\leq 	\|\rd{t_1}\ld{t_2}u\|_{\LL}\leq  \frac{1}{\cos(t_1\pi)}	\|\ld{t_1+t_2}u\|_{\LL} .
	\end{equation}
Analogously,
	\begin{equation}\label{eq:bound-3}
	\cos(t_1\pi)	\|\rd{t_1+t_2}u\|_{\LL}\leq 	\|\ld{t_1}\rd{t_2}u\|_{\LL}\leq  \frac{1}{\cos(t_1\pi)}	\|\rd{t_1+t_2}u\|_{\LL} .
\end{equation}
\end{lemma}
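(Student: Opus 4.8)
The plan is to reduce the mixed-derivative estimate \eqref{eq:bound-2} to the single-sided seminorm equivalence of \Cref{lemma:bound-1} by peeling off the inner derivative and passing through the integral representations. First I would invoke \Cref{pro:alternate-form}: since $u\in\hilbert{t_1+t_2}$ and $0<t_1+t_2<1$, there exist $J_1,J_2\in\LL$ with $u=\li{t_1+t_2}J_1=\ri{t_1+t_2}J_2$, and moreover $\ld{t_1+t_2}u=J_1$, $\rd{t_1+t_2}u=J_2$ a.e. Applying the standard Riemann--Liouville composition law $\ld{t_2}\li{t_1+t_2}=\li{t_1}$ (valid because $t_1+t_2\ge t_2$) then gives $\ld{t_2}u=\li{t_1}J_1$, and symmetrically $\rd{t_2}u=\ri{t_1}J_2$.

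Next I would set $v:=\ld{t_2}u$ and verify $v\in\hilbert{t_1}$. This is exactly \Cref{lem:subtle-seminorm} applied with $t=t_1+t_2$ and $\sigma=t_1$: the hypotheses $0<t_1<1/2$ and $t_2>0$ guarantee $0<\sigma<\min\{1/2,t\}$, so both $\ld{t_2}u$ and $\rd{t_2}u$ lie in $\hilbert{t_1}$. With the representation $v=\li{t_1}J_1$ in hand, the left-inverse rule $\ld{t_1}\li{t_1}=\mathrm{Id}$ on $\LL$ yields the key identity
\[
\ld{t_1}v=\ld{t_1}\ld{t_2}u=J_1=\ld{t_1+t_2}u,
\]
so that $\|\ld{t_1}v\|_{\LL}=\|\ld{t_1+t_2}u\|_{\LL}$.

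Finally, since $v\in\hilbert{t_1}$ and $t_1\ne 1/2$, I would apply \eqref{eq:bound-1} of \Cref{lemma:bound-1} to $v$ (with the parameter $t$ there taken equal to $t_1$), obtaining
\[
|\cos(t_1\pi)|\,\|\ld{t_1}v\|_{\LL}\le\|\rd{t_1}v\|_{\LL}\le\frac{1}{|\cos(t_1\pi)|}\,\|\ld{t_1}v\|_{\LL}.
\]
Substituting $\rd{t_1}v=\rd{t_1}\ld{t_2}u$ together with $\|\ld{t_1}v\|_{\LL}=\|\ld{t_1+t_2}u\|_{\LL}$, and noting $\cos(t_1\pi)>0$ for $0<t_1<1/2$, produces \eqref{eq:bound-2}. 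The bound \eqref{eq:bound-3} follows in the mirror-image fashion: taking $w:=\rd{t_2}u=\ri{t_1}J_2\in\hilbert{t_1}$, the composition law gives $\rd{t_1}w=\rd{t_1+t_2}u$, and applying \eqref{eq:bound-1-2} of \Cref{lemma:bound-1} to $w$ bounds $\|\ld{t_1}w\|_{\LL}=\|\ld{t_1}\rd{t_2}u\|_{\LL}$ between $\cos(t_1\pi)\|\rd{t_1+t_2}u\|_{\LL}$ and the same quantity times $1/\cos(t_1\pi)$.

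I expect the main obstacle to be justifying the composition identities $\ld{t_2}u=\li{t_1}J_1$ and $\ld{t_1}\ld{t_2}u=\ld{t_1+t_2}u$ rigorously for these \emph{weak} Riemann--Liouville derivatives rather than merely formally; the cleanest route is to carry everything through the integral representations of \Cref{pro:alternate-form} and \Cref{lem:subtle-seminorm}, where the semigroup law $\li{a}\li{b}=\li{a+b}$ and the differentiation rule $\ld{s}\li{s}=\mathrm{Id}$ hold on $\LL$ without ambiguity. One must also track the role of the strict bound $t_1<1/2$: it is needed both so that $\cos(t_1\pi)>0$ (making the constants positive and the absolute values superfluous) and so that $\sigma=t_1<1/2$ in \Cref{lem:subtle-seminorm}, which is precisely why the hypothesis singles out $t_1$ as the half-bounded exponent.
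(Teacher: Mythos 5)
Your proof is correct and follows essentially the same route as the paper's: apply \Cref{lem:subtle-seminorm} with $\sigma=t_1$ to get $\ld{t_2}u\in\hilbert{t_1}$, apply \Cref{lemma:bound-1} to that function, and identify $\ld{t_1}\ld{t_2}u=\ld{t_1+t_2}u$ by the semigroup property (which holds since $u\in\hilbert{t_1+t_2}$). The only difference is that you spell out, via the integral representations of \Cref{pro:alternate-form}, the composition identities that the paper simply cites as ``the semigroup property of fractional derivatives,'' which is a welcome but not structurally different elaboration.
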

\begin{proof}
Since $u\in \hilbert{t_1+t_2}$ and $0<t_1<1/2$,  \Cref{lem:subtle-seminorm} is applicable and we know that $\ld{t_2}u \in \hilbert{t_1}$. Then, applying \Cref{lemma:bound-1} gives
\[
\cos(t_1\pi) \|\ld{t_1}\ld{t_2}u\|_{\LL}    \leq  	\|\rd{t_1}\ld{t_2}u\|_{\LL}\leq \frac{1}{\cos(t_1\pi)} \|\ld{t_1}\ld{t_2}u\|_{\LL}.
\]
Notice the semigroup property of fractional derivatives is valid to the leftmost and rightmost sides due to  $u\in \hilbert{t_1+t_2}$. This is
\[
\cos(t_1\pi) \|\ld{t_1+t_2}u\|_{\LL}    \leq  	\|\rd{t_1}\ld{t_2}u\|_{\LL}\leq \frac{1}{\cos(t_1\pi)} \|\ld{t_1+t_2}u\|_{\LL}.
\]
\eqref{eq:bound-3} can be analogously proved.
\end{proof}

\section{Variation formulation, weak and true solutions}\label{sec:variation}
Let $s=\frac{\alpha+\beta}{2}$ throughout \Cref{sec:variation}. We first construct the following sesquilinear form
 $\blinear{u}{v}$  for $u,v\in\hsss$.  Although \eqref{eq:pde} can be readily solved into a generalized fractional integral equation involving $u$ and $\lambda$, without using the sesquilinear form,  we intend to provide a foundation for later numerical algorithm and experiments for computing the eigenvalues and plotting the graphs of eigenfunctions.
\begin{definition}
	\begin{enumerate}
		\item The sesquilinear form $\blinear{\cdot}{\cdot}$ associated with the fractional operator $A$ in \eqref{eq:pde} is defined as
		\begin{equation}\label{eq:bilinear}
		 \begin{split}
		 	 \blinear{u}{v}:=
		\begin{cases}
			(\rd{\frac{\alpha+\beta}{2}}u, \ld{\frac{\beta-\alpha}{2}}\rd{\alpha}v)_\Omega,\quad & \text{if}\quad \beta>\alpha,\\
			(\rd{\alpha}u, \rd{\alpha}v)_\Omega, &\text{if}\quad  \beta=\alpha,\\
			(\ld{\frac{\alpha-\beta}{2}}\rd{\beta}u, \rd{\frac{\alpha+\beta}{2}}v)_\Omega,
					& \text{if}\quad \beta<\alpha,			
		\end{cases}
	 \end{split}
		 \end{equation}
		for $u,v\in\hsss$. Here the symbol $(f,g)_\Omega$ denotes $\int_{\Omega}f\overline{g}\, dx$
		\item We say that $u\in\hsss$ is a weak solution of the boundary-value problem \eqref{eq:pde} if
		\[ \blinear{u}{v}=\inner{f}{v} \]
		for all $v\in\hsss$.
	\end{enumerate}
\end{definition}

Herein, \eqref{eq:bilinear} is well-defined for each case. Namely, each component in the inner product belongs to $\LL$: when $\beta> \alpha$, $\rd{\frac{\alpha+\beta}{2}}u\in \LL$ by  \Cref{pro:alternate-form} and $\ld{\frac{\beta-\alpha}{2}}\rd{\alpha}v\in \LL$ can be directly verified by noting that it can be written as $\ld{\frac{\beta-\alpha}{2}}\rd{\alpha}v=-\li{1-\frac{\beta-\alpha}{2}}\ri{1-\alpha}v''-\rd{\alpha}v\big |_{x=a}\cdot \ld{\frac{\beta-\alpha}{2}}1$; when $\beta=\alpha$, by $\rd{\alpha}u\in \LL$ by  \Cref{pro:alternate-form} and $\rd{\alpha}v\in \LL$ is straightforward; when $\beta<\alpha$, $\ld{\frac{\alpha-\beta}{2}}\rd{\beta}u\in \LL$ by \Cref{lem:subtle-seminorm} and $\rd{\frac{\alpha+\beta}{2}}v\in \LL$ follows naturally.
\begin{lemma}\label{lemma:lemma1}
	$\blinear{\cdot}{\cdot}$ is bounded and strongly coercive on $\hsss$. That is, there are real constants $c_1,c_2>0$ such that
	\begin{equation}\label{eq:B-1}
		\left|\blinear{u}{v}\right|\leq c_1  \left|\left|u\right|\right|_{\hsss} \left|\left|v\right|\right|_{\hsss}
	\end{equation}
	and
	\begin{equation}\label{eq:B-2} 
	\left|\blinear{u}{u}\right|\geq c_2 \left|\left|u\right|\right|_{\hsss}^2 
	\end{equation}
	for all $u,v\in\hsss$.
\end{lemma}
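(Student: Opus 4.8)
The plan is to treat the three cases of \eqref{eq:bilinear} in parallel, since the branches $\beta>\alpha$ and $\beta<\alpha$ are mirror images of one another (exchange the roles of $\ld{}$ and $\rd{}$) while $\beta=\alpha$ is a degenerate, easier instance. Throughout set $s=\frac{\alpha+\beta}{2}\in(1/2,1)$ and $\gamma=\frac{|\beta-\alpha|}{2}$. The constraints $0\le\alpha,\beta\le1$ and $1<\alpha+\beta<2$ force $s\neq 1/2$ and $0\le\gamma<1/2$, so $\cos(s\pi)\neq 0$ and $\cos(\gamma\pi)>0$; this is exactly what makes every invocation of \Cref{lemma:bound-1} and \Cref{lemma:Bound} legitimate. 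The whole argument rests on rewriting the single ``heavy'' factor in each case as a mixed derivative of total order $s$ and feeding it to those two lemmas, after which the seminorm $|u|_{\hilbert{s}}=\|\rdd{s}\tilde u\|_{L^2(\mathbb R)}$ and the full norm are compared by a fractional Poincar\'e inequality.

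For boundedness I would start from \eqref{eq:bilinear} and apply Cauchy--Schwarz in $\LL$. In the representative case $\beta>\alpha$ this gives $|\blinear{u}{v}|\le\|\rd{s}u\|_{\LL}\,\|\ld{\gamma}\rd{\alpha}v\|_{\LL}$. The first factor is controlled by the seminorm through \Cref{lemma:bound-1}, $\|\rd{s}u\|_{\LL}\le\|\rdd{s}\tilde u\|_{L^2(\mathbb R)}=|u|_{\hilbert{s}}\le\|u\|_{\hilbert{s}}$. For the second factor I would invoke \Cref{lemma:Bound} with $t_1=\gamma$, $t_2=\alpha$ (admissible since $\gamma<1/2$ and $\gamma+\alpha=s<1$), yielding $\|\ld{\gamma}\rd{\alpha}v\|_{\LL}\le\frac{1}{\cos(\gamma\pi)}\|\rd{s}v\|_{\LL}\le\frac{1}{\cos(\gamma\pi)}\|v\|_{\hilbert{s}}$. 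Multiplying the two bounds establishes \eqref{eq:B-1} with $c_1=1/\cos(\gamma\pi)$; the branch $\beta<\alpha$ is identical after swapping left and right derivatives, and $\beta=\alpha$ is immediate from \Cref{pro:alternate-form}.

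Coercivity is the heart of the matter. The idea is to expose the positivity hidden in \eqref{eq:wellknwon}. With $\beta>\alpha$, set $w=\rd{\alpha}u$; by \Cref{lem:subtle-seminorm} (applied with $t=s$, $\sigma=\gamma$) we have $w\in\hilbert{\gamma}$, and the semigroup property gives $\rd{\gamma}w=\rd{s}u$, so that $\blinear{u}{u}=(\rd{\gamma}w,\ld{\gamma}w)_\Omega$. Splitting $w$ into real and imaginary parts and applying \eqref{eq:wellknwon} to each, I obtain $\mathrm{Re}\,\blinear{u}{u}=\cos(\gamma\pi)\,\|\ldd{\gamma}\tilde w\|_{L^2(\mathbb R)}^2\ge 0$, hence $|\blinear{u}{u}|\ge\cos(\gamma\pi)\,\|\ldd{\gamma}\tilde w\|_{L^2(\mathbb R)}^2$. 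I would then descend from the $\mathbb R$-norm to the seminorm of $u$: \Cref{lemma:bound-1} (together with \Cref{weak-equivalence-norm}) gives $\|\ldd{\gamma}\tilde w\|_{L^2(\mathbb R)}\ge\|\rd{\gamma}w\|_{\LL}=\|\rd{s}u\|_{\LL}$, and a second use of \Cref{lemma:bound-1} gives $\|\rd{s}u\|_{\LL}\ge\cos(s\pi)\,|u|_{\hilbert{s}}$, so that $|\blinear{u}{u}|\ge\cos(\gamma\pi)\cos^2(s\pi)\,|u|_{\hilbert{s}}^2$. A final fractional Poincar\'e inequality $\|u\|_{\LL}\le C_\Omega|u|_{\hilbert{s}}$, available because elements of $\hilbert{s}$ are supported in the bounded set $\overline\Omega$, upgrades the seminorm to the full norm and yields \eqref{eq:B-2}.

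The main obstacle I anticipate is twofold. First, one must rigorously justify that the intermediate object $w=\rd{\alpha}u$ truly lies in $\hilbert{\gamma}$, so that both the semigroup identity and \eqref{eq:wellknwon} are applicable; this is precisely the function of \Cref{lem:subtle-seminorm} and explains why that technical lemma was isolated earlier. Second, \eqref{eq:wellknwon} produces a \emph{nonnegative} quantity only because $\cos(\gamma\pi)>0$, which is guaranteed exactly by $\gamma<1/2$, i.e. by $|\beta-\alpha|<1$; were $\gamma\ge 1/2$ the sign would reverse and the real-part argument would collapse, so the restriction built into the admissible $(\alpha,\beta)$ is essential rather than cosmetic. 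A minor but necessary bookkeeping point is to confirm the fractional Poincar\'e inequality in this precise space, either by citing it as standard or by recording it as a standing hypothesis.
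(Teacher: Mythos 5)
Your proposal is correct and follows essentially the same route as the paper's own proof: H\"older's inequality combined with \Cref{lemma:Bound} and \Cref{lemma:bound-1} for boundedness, and for coercivity the same splitting into real and imaginary parts together with the semigroup property, \Cref{lem:subtle-seminorm}, the identity \eqref{eq:wellknwon}, a descent to the seminorm via \Cref{lemma:bound-1}, and finally the fractional Poincar\'e--Friedrichs inequality (which the paper records as \Cref{poincareinequality} in the appendix, so no extra hypothesis is needed). The only blemish is cosmetic: since $s\in(1/2,1)$ forces $\cos(s\pi)<0$, your intermediate inequality should read $\|\rd{s}u\|_{\LL}\geq|\cos(s\pi)|\,|u|_{\hilbert{s}}$, which still produces your (correctly squared) constant $\cos(\gamma\pi)\cos^{2}(s\pi)$.
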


\begin{proof}
1.  For $\forall$ $u,v\in\hsss$,   applying  the H\"{o}lder inequality to \eqref{eq:bilinear}, we have 
		\[
			\begin{split}
				\left|\blinear{u}{v}\right|	\leq 	\begin{cases}
					\|\rd{\frac{\alpha+\beta}{2}}u\|_{\LL}\|\ld{\frac{\beta-\alpha}{2}}\rd{\alpha}v\|_{\LL},\quad & \text{if}\quad \beta>\alpha,\\
					\|\rd{\alpha}u\|_{\LL} \|\rd{\alpha}v\|_{\LL}, &\text{if}\quad  \beta=\alpha,\\
					\|\ld{\frac{\alpha-\beta}{2}}\rd{\beta}u\|_{\LL} \|\rd{\frac{\alpha+\beta}{2}}v\|_{\LL},
					&\text{if}\quad \beta<\alpha.		
				\end{cases}
			\end{split}
		\]
	Since $0<|\frac{\beta-\alpha}{2}|<1/2$, we apply \Cref{lemma:Bound} to both cases of $\beta>\alpha$ and $\beta<\alpha$ above. We obtain
		\begin{displaymath}
				\begin{split}
			\left|\blinear{u}{v}\right|&	\leq 
				\begin{cases}
			\cos^{-1}(\frac{\beta-\alpha}{2}\pi)	\|\rd{s}u\|_{\LL}\|\rd{s}v\|_{\LL},\quad &\text{if}\quad \beta>\alpha,\\
				\|\rd{s}u\|_{\LL} \|\rd{s}v\|_{\LL}, &\text{if}\quad  \beta=\alpha,\\
			\cos^{-1}(\frac{\alpha-\beta}{2}\pi)	\|\rd{s}u\|_{\LL} \|\rd{s}v\|_{\LL},
				& \text{if}\quad \beta<\alpha,			
			\end{cases}\\
			&\leq 	\begin{cases}
				\cos^{-1}(\frac{\beta-\alpha}{2}\pi)	\|u\|_{\hilbert{s}}\|v\|_{\hilbert{s}},\quad & \text{if}\quad \beta>\alpha,\\
				\|u\|_{\hilbert{s}}\|v\|_{\hilbert{s}}, &\text{if}\quad  \beta=\alpha,\\
				\cos^{-1}(\frac{\alpha-\beta}{2}\pi)	\|u\|_{\hilbert{s}} \|v\|_{\hilbert{s}},
				&\text{if}\quad \beta<\alpha,			
			\end{cases}\\
			&\leq c_1||u||_{\hsss} ||v||_{\hsss},
		\end{split}
		\end{displaymath}
		where $c_1=\max \{\cos^{-1}(\frac{|\beta-\alpha|}{2}), 1\}$. We have proved \eqref{eq:B-1}.

2.  To show \eqref{eq:B-2}, pick any $u\in \hilbert{s}$. Let us consider $\tilde{u}$, the trivial zero extension of $u$ from $\Omega$ to $\mathbb{R}$, and write it into $\tilde{u}(x)=f_1(x)+if_2(x), x\in \mathbb{R}$, where $f_1,f_2$ are real-valued functions (Note since $\tilde{u}$ has compact support in $\Omega$, so are $f_1, f_2$). Then
		\begin{equation}\label{eq:goalequ}
		\blinear{u}{u}=\blinear{f_1}{f_1}+\blinear{f_2}{f_2}+i(\blinear{f_2}{f_1}-\blinear{f_1}{f_2}).
		\end{equation}
			
 In the following, we first calculate 	$\blinear{f_1}{f_1}$. Due to  $f_1,f_2\in \hilbert{s}$, we directly obtain
	\[
	\begin{split}
	\blinear{f_1}{f_1}&=
			\begin{cases}
		(\rd{\frac{\alpha+\beta}{2}}f_1, \ld{\frac{\beta-\alpha}{2}}\rd{\alpha}f_1)_\Omega,\quad &\text{if}\quad \beta>\alpha,\\
		(\rd{\alpha}f_1, \rd{\alpha}f_1)_\Omega, &\text{if}\quad  \beta=\alpha,\\
		(\ld{\frac{\alpha-\beta}{2}}\rd{\beta}f_1, \rd{\frac{\alpha+\beta}{2}}f_1)_\Omega,
		& \text{if}\quad \beta<\alpha,			
	\end{cases}\\
&=\begin{cases}
	(\rd{\frac{\beta-\alpha}{2}}\rd{\alpha}f_1, \ld{\frac{\beta-\alpha}{2}}\rd{\alpha}f_1)_\Omega,\quad &\text{if}\quad \beta>\alpha,\\
	(\rd{\alpha}f_1, \rd{\alpha}f_1)_\Omega, &\text{if}\quad  \beta=\alpha,\\
	(\ld{\frac{\alpha-\beta}{2}}\rd{\beta}f_1, \rd{\frac{\alpha-\beta}{2}}\rd{\beta}f_1)_\Omega,
	& \text{if}\quad \beta<\alpha.		
\end{cases}
\end{split}
\]
Note $0<\frac{|\alpha-\beta|}{2}<1/2$, which allows us to  invoke \Cref{lem:subtle-seminorm} to obtain that $\rd{\alpha}f_1\in \hilbert{\frac{\beta-\alpha}{2}}$ when $\beta>\alpha$, and that $\ld{\beta}f_1\in \hilbert{\frac{\alpha-\beta}{2}}$ when $\alpha>\beta$. Applying  identity \eqref{eq:wellknwon} for these two different cases, we arrive at
\[
\begin{split}
&	=
	\begin{cases}
	\cos(\frac{\beta-\alpha}{2}\pi)(\rdd{\frac{\beta-\alpha}{2}}\widetilde{	\rd{\alpha}f_1}, \rdd{\frac{\beta-\alpha}{2}}\widetilde{	\rd{\alpha}f_1})_\mathbb{R},\quad &\text{if}\quad \beta>\alpha,\\
	(\rd{\alpha}f_1, \rd{\alpha}f_1)_\Omega, &\text{if}\quad  \beta=\alpha,\\
\cos(\frac{\alpha-\beta}{2}\pi)	(\rdd{\frac{\alpha-\beta}{2}}\widetilde{\rd{\beta}f_1}, \rdd{\frac{\alpha-\beta}{2}}\widetilde{\rd{\beta}f_1})_\mathbb{R},
	& \text{if}\quad \beta<\alpha.	
\end{cases}\\
&\geq 
\begin{cases}
	\cos(\frac{\beta-\alpha}{2}\pi)(\rd{s}f_1, \rd{s}f_1)_\Omega,\quad &\text{if}\quad \beta>\alpha,\\
	(\rd{s}f_1, \rd{s}f_1)_\Omega, &\text{if}\quad  \beta=\alpha,\\
	\cos(\frac{\alpha-\beta}{2}\pi)	(\rd{s}f_1, \rd{s}f_1)_\Omega,
	& \text{if}\quad \beta<\alpha.	
\end{cases}
\end{split}
	\]
	By applying \Cref{lemma:bound-1}, this further becomes
	\begin{equation}\label{eq:laststep-inequality}
	\begin{split}
	&\geq 
		\begin{cases}
			\cos(\frac{\beta-\alpha}{2}\pi)|\cos(\frac{\beta+\alpha}{2}\pi)|  \cdot |f_1|_{\hilbert{s}}^2,\quad &\text{if}\quad \beta>\alpha,\\
			|\cos(\frac{\beta+\alpha}{2}\pi)|  \cdot |f_1|_{\hilbert{s}}^2, &\text{if}\quad  \beta=\alpha,\\
			\cos(\frac{\alpha-\beta}{2}\pi)|\cos(\frac{\beta+\alpha}{2}\pi)|\cdot   |f_1|_{\hilbert{s}}^2,
			& \text{if}\quad \beta<\alpha,
		\end{cases}\\
&	\geq 	|\cos(s\pi)| \cdot  |f_1|_{\hilbert{s}}^2.
	\end{split}
	\end{equation}
On the other hand, by the Fractional  Poincar\'e-Friedrichs inequality (\Cref{poincareinequality} in \ref{appendix-sobolev})
\[
	\|g\|_{\LL}\leq \frac{(b-a)^t}{t\Gamma(t)} |g|_{\hilbert{t}}, \forall g\in \hilbert{t}, t>0,
\]	
we know that
\[
|f_1|^2_{\hilbert{s}} \geq \left(1+\frac{(b-a)^{2s}}{s^2\Gamma^2(s)}\right)^{-1} \|f_1\|^2_{\hilbert{s}}.
\]
Applying back substitution into the last line of \eqref{eq:laststep-inequality}, we finally have
\[
\blinear{f_1}{f_1}\geq |\cos(s\pi)|\left(1+\frac{(b-a)^{2s}}{s^2\Gamma^2(s)}\right)^{-1} \|f_1\|^2_{\hilbert{s}}.
\]

3. Similarly, we can verify that
\[
\blinear{f_2}{f_2}\geq |\cos(s\pi)|\left(1+\frac{(b-a)^{2s}}{s^2\Gamma^2(s)}\right)^{-1} \|f_2\|^2_{\hilbert{s}}.
\]
Hence, recalling \eqref{eq:goalequ}, we obtain
\[
\big |\blinear{u}{u}\big| \geq \blinear{f_1}{f_1}+\blinear{f_2}{f_2}\geq |\cos(s\pi)|\left(1+\frac{(b-a)^{2s}}{s^2\Gamma^2(s)}\right)^{-1} \|u\|^2_{\hilbert{s}}.
\]
This gives \eqref{eq:B-2}  by setting $c_2=|\cos(s\pi)|\left(1+\frac{(b-a)^{2s}}{s^2\Gamma^2(s)}\right)^{-1}$, which completes the proof.
\end{proof}

Once we have the boundedness and coercivity, by using the complex variants of the Lax-Milgram Theorem and the regularity of weak solution, we have the following.
\begin{lemma}\label{lemma:lemma2}
	Given any $f\in \llo$, there is a unique weak solution $u\in\hsss$ to the boundary-value problem
	\begin{equation}\label{eq:weaksolutionforf}
		\begin{cases}
			Au=f \quad\text{in} \quad \Omega,\\
			u(a)=u(b)=0.
		\end{cases}
	\end{equation}
And $u$ is also the  true solution (namely, $Au=f$ a.e.  with $u(a)=u(b)=0$) that can be represented as
	\begin{equation}\label{eq:solution}
	u(x)=\ri{\beta}\,(\li{\alpha}f+c\,\ld{1-\alpha}\,1), x\in \Omega
\end{equation}
where 
\begin{equation}\label{eq:constant}
	c=-\frac{\Gamma(\alpha)\Gamma(\beta)(\alpha+\beta-1)}{(b-a)^{\alpha+\beta-1}}\cdot \ri{\beta}\li{\alpha}f \big |_{x=a}.
\end{equation}
Furthermore, we have the norm estimate
	\begin{equation}\label{eq:estimate}
	\|u\|_{\hsss} \leq C_{\alpha,\beta}\|f\|_{L^2(\Omega)}
	\end{equation}
	with a certain positive constant $C_{\alpha,\beta}$  independent of $f$.
\end{lemma}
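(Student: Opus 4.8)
The plan is to establish the three claims in sequence: well-posedness of the weak problem, the passage from the weak solution to a genuine (pointwise a.e.) solution together with the closed form \eqref{eq:solution}, and the a priori bound \eqref{eq:estimate}.

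\emph{Existence and uniqueness.} I would appeal to the complex Lax--Milgram theorem. \Cref{lemma:lemma1} already supplies the boundedness \eqref{eq:B-1} and the modulus coercivity \eqref{eq:B-2}, while the antilinear functional $v\mapsto\inner{f}{v}$ is bounded on $\hsss$ since $|\inner{f}{v}|\le\|f\|_{\LL}\|v\|_{\LL}\le\|f\|_{\LL}\|v\|_{\hsss}$ by Cauchy--Schwarz and the continuous embedding $\hsss\hookrightarrow\LL$. Representing $\blinear{u}{v}=(Su,v)_{\hsss}$ for a bounded operator $S$, the estimate $|\blinear{u}{u}|\ge c_2\|u\|^2_{\hsss}$ shows that both $S$ and its adjoint are bounded below (the modulus is insensitive to the conjugation defining the adjoint form), so $S$ is a bijection and the weak solution $u\in\hsss$ exists and is unique.

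\emph{From weak to true solution.} The form \eqref{eq:bilinear} is engineered so that moving every fractional derivative off the test function and onto $u$ by fractional integration by parts—each step converting a left operator into a right one—lets the orders telescope exactly to $(\alpha,\beta)$. For instance, when $\beta>\alpha$ the expression $\blinear{u}{v}=(\rd{s}u,\ld{\frac{\beta-\alpha}{2}}\rd{\alpha}v)_\Omega$ becomes successively $(\rd{\beta}u,\rd{\alpha}v)_\Omega$ and then $\inner{\ld{\alpha}\rd{\beta}u}{v}=\inner{Au}{v}$, using $s+\tfrac{\beta-\alpha}{2}=\beta$; the cases $\beta=\alpha$ and $\beta<\alpha$ are analogous. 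Consequently $\inner{Au-f}{v}=0$ for every $v\in C_0^\infty(\Omega)$, hence $Au=f$ a.e.

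\emph{Representation and boundary conditions.} To obtain \eqref{eq:solution} I would invert the two Riemann--Liouville derivatives in turn. Applying $\li{\alpha}$ to $\ld{\alpha}(\rd{\beta}u)=f$ gives $\rd{\beta}u=\li{\alpha}f+c_1(x-a)^{\alpha-1}$; rewriting $(x-a)^{\alpha-1}=\Gamma(\alpha)\ld{1-\alpha}1$ and then applying $\ri{\beta}$ yields $u=\ri{\beta}(\li{\alpha}f+c\,\ld{1-\alpha}1)+c_2(b-x)^{\beta-1}$. Because $s=\tfrac{\alpha+\beta}{2}>\tfrac12$, elements of $\hsss$ are, through the $\li{s}$-representation of \Cref{pro:alternate-form}, continuous up to $\partial\Omega$ and vanish there; this both excludes the singular homogeneous term, forcing $c_2=0$, and gives meaning to $u(a)=0$. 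Imposing $u(a)=0$ and evaluating $\ri{\beta}\ld{1-\alpha}1$ at $x=a$, which equals $\frac{(b-a)^{\alpha+\beta-1}}{\Gamma(\alpha)\Gamma(\beta)(\alpha+\beta-1)}$, pins $c$ to the value \eqref{eq:constant}, while $u(b)=0$ holds automatically since $\ri{\beta}$ of an integrable function vanishes at $x=b$. Finally, the estimate \eqref{eq:estimate} follows by taking $v=u$: coercivity and Cauchy--Schwarz give $c_2\|u\|^2_{\hsss}\le|\blinear{u}{u}|=|\inner{f}{u}|\le\|f\|_{\LL}\|u\|_{\hsss}$, so that $C_{\alpha,\beta}=1/c_2$ works.

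The main obstacle is the second step. Justifying the integration-by-parts chain requires knowing a priori that $\rd{\beta}u$ and $\ld{\alpha}\rd{\beta}u$ lie in $\LL$ for the weak solution, which has to be extracted from the composition and mapping properties of \Cref{lem:subtle-seminorm} and \Cref{pro:alternate-form} rather than assumed; and although $s>1/2$ ultimately tames the borderline homogeneous solutions $(x-a)^{\alpha-1}$ and $(b-x)^{\beta-1}$, discarding the $(b-x)^{\beta-1}$ term and reading off $u(a)=0$ must be argued through the space characterizations of \Cref{pro:subspace} and \Cref{pro:alternate-form}, since naive pointwise substitution is not licensed at the endpoints.
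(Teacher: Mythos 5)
Your first step (complex Lax--Milgram via \Cref{lemma:lemma1}) and your last step (coercivity plus Cauchy--Schwarz, $C_{\alpha,\beta}=1/c_2$) coincide with the paper's steps 1 and 5. The genuine problem is your passage from weak to true solution. For $\beta>\alpha$ you propose the chain $(\rd{\frac{\alpha+\beta}{2}}u,\ld{\frac{\beta-\alpha}{2}}\rd{\alpha}v)_\Omega \rightarrow (\rd{\beta}u,\rd{\alpha}v)_\Omega \rightarrow \inner{\ld{\alpha}\rd{\beta}u}{v}$. Both moves require derivatives of $u$ of order strictly larger than $s=\frac{\alpha+\beta}{2}$: the first needs $\rd{\beta}u\in\LL$ (note $\beta>s$ in this case), and the second needs $\ld{\alpha}\rd{\beta}u\in\LL$, which is literally the assertion ``$Au$ exists in $\LL$'' that the lemma is trying to prove --- the argument is circular. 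You flag this obstacle yourself, but the tools you point to cannot close it: \Cref{lem:subtle-seminorm} is order-lowering (it produces $\ld{t-\sigma}u,\rd{t-\sigma}u$ of order $t-\sigma<t$ for $u\in\hilbert{t}$), and \Cref{pro:alternate-form} only yields derivatives of order exactly $s$. Neither can manufacture $\rd{\beta}u$ for a general element of $\hilbert{s}$; such a derivative need not exist at all, and its existence for the weak solution is a consequence of the equation, not of membership in $\hilbert{s}$.

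The paper resolves this by moving in the opposite direction: it lowers rather than raises the order of the operators acting on $u$. Using the density \eqref{eq:normconvergence} of $C_0^\infty(\Omega)$ in $\hsss$, it integrates by parts on smooth approximants $u_n$ and on the test function until the weak identity involves only fractional \emph{integrals} paired against $v''$, then passes to the limit (which needs only the $L^2$ convergence of $u_n$ and $\rd{s}u_n$ guaranteed by \Cref{def:ForInterval}) to reach the purely integral identity \eqref{eq:almostthere}, namely $\li{1-\alpha}(\ri{1-\beta}u-\tilde{c})+\li{2}f=\tilde{c}_1x+\tilde{c}_2$, which is meaningful for mere $u,f\in\LL$. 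Solving that identity --- differentiating both sides and imposing $u(a)=u(b)=0$ --- produces the representation \eqref{eq:solution}--\eqref{eq:constant} and, \emph{a posteriori}, the regularity $Au=f$ a.e.; the explicit formula is the source of the regularity, whereas your plan needs the regularity before the formula. (Your inversion computation and the evaluation $\ri{\beta}\ld{1-\alpha}1\big|_{x=a}=\frac{(b-a)^{\alpha+\beta-1}}{\Gamma(\alpha)\Gamma(\beta)(\alpha+\beta-1)}$ are consistent with \eqref{eq:constant}, but they sit downstream of the gapped step, since they presuppose $\ld{\alpha}\rd{\beta}u=f$ pointwise.)
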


\begin{proof}
1. Fix a function $f\in \llo$ and define $\left<f,g\right>:=\int_{\Omega}f\overline{g}\, dx$ for any $g\in \LL$. This is a bounded linear functional on $\llo$, and thus on $\hsss$. From \Cref{lemma:lemma1}, $\blinear{\cdot}{\cdot}$ is bounded and coercive on $\hsss$. We apply the Lax-Milgram Theorem to prove the existence and uniqueness of a function $u\in\hsss$ satisfying
		\begin{equation} \label{weak-solution}
			\blinear{u}{v}=\left<f,v\right> 
		\end{equation}
		for all $v\in\hilbert{s}$; $u$ is consequently the unique weak solution of \eqref{eq:weaksolutionforf} by definition.

2. Now, we need to show $Au=f$ a.e. from $\blinear{u}{v}=\left<f,v\right> $, $\forall v\in\hsss$.

Since $u\in \hsss$, $u\in \hilbert{t}$ for any $0\leq t\leq s$ and the R-L derivatives $\ld{t}u, \rd{t}u$ exist and belong to $\LL$. For any fixed $t$, by \Cref{def:ForInterval}, there exists a corresponding sequence $\{u_n\}_{n=1}^\infty\subset C_0^\infty(\Omega)$ such that
\begin{equation}\label{eq:normconvergence}
\lim_{n\rightarrow \infty} u_n\longrightarrow u, \, \lim_{n\rightarrow \infty} \ld{t}u_n\longrightarrow \ld{t}u,\,\text{and}\,\lim_{n\rightarrow \infty} \rd{t}u_n\longrightarrow \rd{t}u\,\,\text{in}\,\,\LL.
\end{equation}
The fact \eqref{eq:normconvergence} allows  \eqref{weak-solution} to be written into the limit form
\begin{equation}\label{eq:weaklinearform}
\blinear{u}{v}=\lim_{n\rightarrow \infty}\blinear{u_n}{v}=(f, v)_\Omega,\quad \forall v\in C_0^\infty(\Omega),
\end{equation}
by observing that
		\begin{equation}
	\begin{split}
\blinear{u}{v}=
		\begin{cases}
			(\rd{\frac{\alpha+\beta}{2}}u, \ld{\frac{\beta-\alpha}{2}}\rd{\alpha}v)_\Omega, &  \beta>\alpha,\\
			(\rd{\alpha}u, \rd{\alpha}v)_\Omega, & \beta=\alpha,\\
			(\ld{\frac{\alpha-\beta}{2}}\rd{\beta}u, \rd{\frac{\alpha+\beta}{2}}v)_\Omega,
			& \beta<\alpha,			
		\end{cases}
	=
			\begin{cases}
		(\rd{\frac{\alpha+\beta}{2}}u, \ld{\frac{\beta-\alpha}{2}}\rd{\alpha}v)_\Omega,& \beta>\alpha,\\
		(\rd{\alpha}u, \rd{\alpha}v)_\Omega, & \beta=\alpha,\\
		(\rd{\beta}u, \rd{\alpha}v)_\Omega,
		&  \beta<\alpha.			
	\end{cases}
	\end{split}
\end{equation} 

3. In the following, we only discuss one of the cases, $\beta>\alpha$, and other cases can be justified similarly. Now suppose $\beta>\alpha$, then
\[
\lim_{n\rightarrow \infty}\blinear{u_n}{v}=\lim_{n\rightarrow \infty}	(\rd{\frac{\alpha+\beta}{2}}u_n, \ld{\frac{\beta-\alpha}{2}}\rd{\alpha}v)_\Omega.
\]
Since $u_n, v\in C_0^\infty(\Omega)$, we use the fractional integration by parts and differentiation by parts, respectively, to reduce it to
\[
-\lim_{n\rightarrow \infty}(\li{1-\alpha}\rd{\beta}u_n, Dv)_\Omega.
\]
We add a suitable constant $c_n=\ri{1-\beta}u_n\big|_{x=a}$ such that we can exchange the differentiation and integration operators
\[
\begin{aligned}
&\lim_{n\rightarrow \infty}(\li{1-\alpha}D(\ri{1-\beta}u_n-c_n), Dv)_\Omega\\
&=\lim_{n\rightarrow \infty}(D\li{1-\alpha}(\ri{1-\beta}u_n-c_n), Dv)_\Omega\\
&=-\lim_{n\rightarrow \infty}(\li{1-\alpha}(\ri{1-\beta}u_n-c_n), v'')_\Omega.
\end{aligned}
\]
Now, it is easy to verify that \eqref{eq:normconvergence} guarantees the convergence of the first component, that is
\begin{equation}\label{eq:equality-1}
-(\li{1-\alpha}(\ri{1-\beta}u-\tilde{c}), v'')_\Omega,
\end{equation}
with $\tilde{c}=\lim_{n\rightarrow \infty}c_n$.
On the other hand, notice
\begin{equation}\label{eq:equality-2}
	(f,v)_\Omega=(\li{2}f, v'')_\Omega.
\end{equation}
Taking into account \eqref{eq:equality-1} and \eqref{eq:equality-2}, \eqref{eq:weaklinearform} now produces
\begin{equation}\label{variationforluma}
(\li{1-\alpha}(\ri{1-\beta}u-\tilde{c})+\li{2}f, v'')_\Omega=0,\quad \forall v\in C_0^\infty(\Omega).
\end{equation}
Hence, \eqref{variationforluma} yields
\begin{equation}\label{eq:almostthere}
\li{1-\alpha}(\ri{1-\beta}u-\tilde{c})+\li{2}f=\tilde{c}_1 x+\tilde{c}_2,\quad\text{for  certain constants}\quad \tilde{c}_1, \tilde{c}_2\in \mathbb{C}.
\end{equation}

4.  Utilizing the boundary condition  $u(a)=u(b)=0$  to simplify and solve \eqref{eq:almostthere} for $u$ with a bit effort, we have
 \[
 u(x)=\ri{\beta}\,(\li{\alpha}f+c\,\ld{1-\alpha}\,1), x\in \Omega
 \]
 with $c$ satisfying \eqref{eq:constant}. Therefore, we have obtained the desired results
  \eqref{eq:solution} and \eqref{eq:constant}. It is a true solution, i.e., $Au=f$ a.e.

5. Lastly, for the estimate~\eqref{eq:estimate}, observe from the coercivity \eqref{eq:B-2}  that 
		\[
	c_2	\|u\|_{\hsss}^2\leq |\blinear{u}{u}|=|\inner{f}{u}|\leq \|f\|_{L^2(\Omega)}\|u\|_{L^2(\Omega)}\leq \|f\|_{L^2(\Omega)}\|u\|_{\hsss},
		\]
		then \eqref{eq:estimate} follows immediately. The proof is complete.
\end{proof}
\section{The distribution of eigenvalues}
According to \Cref{lemma:lemma2}, $u$ is a weak solution if and only if it is a true solution, hence, there is no need to distinguish the weak and true solution in the following. We specify the definition of eigenfunctions as follows.
\begin{definition}
	We say the complex-valued function $u$ is an eigenfunction of \eqref{eq:pde} if  $u$ is a nonzero weak solution of \eqref{eq:pde}, and $\lambda\in \mathbb{C}$ is called the eigenvalue associated with $u$.
\end{definition}
The following theorem characterizes the distribution of all possible eigenvalues.
\begin{theorem}\label{theorem:range}
	Suppose $\lambda\in \mathbb{C}$ is an eigenvalue of \eqref{eq:pde}, then  $|\Arg \lambda|\leq \frac{|\beta-\alpha|\pi }{2}$.
\end{theorem}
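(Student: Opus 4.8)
The plan is to read off the argument of $\lambda$ directly from the sesquilinear form evaluated on the eigenfunction itself. Since an eigenfunction $u$ is a weak solution of $Au=\lambda u$, taking $f=\lambda u$ and $v=u$ in the weak formulation gives $\blinear{u}{u}=\inner{\lambda u}{u}=\lambda\|u\|_{\LL}^2$. As $\|u\|_{\LL}^2>0$ is a positive real number, $\Arg\lambda=\Arg\blinear{u}{u}$, so it suffices to show that $\blinear{u}{u}$ lies in the closed cone $\{z:|\Arg z|\le\frac{|\beta-\alpha|}{2}\pi\}$. I would treat the representative case $\beta>\alpha$ in detail; the case $\beta<\alpha$ is symmetric, and $\beta=\alpha$ is immediate since then $\blinear{u}{u}=\|\rd{\alpha}u\|_{\LL}^2\ge 0$.

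Write $r=\frac{\beta-\alpha}{2}$ and $g=\rd{\alpha}u$. The first step is to collapse the mixed form to a single fractional order: using the semigroup property (valid because $u\in\hilbert{s}$) one has $\rd{\frac{\alpha+\beta}{2}}u=\rd{r}g$, so that
\[
\blinear{u}{u}=\inner{\rd{r}g}{\ld{r}g}=\int_\Omega \rd{r}g\cdot\overline{\ld{r}g}\,dx.
\]
By \Cref{lem:subtle-seminorm} the function $g=\rd{\alpha}u=\rd{s-r}u$ belongs to $\hilbert{r}$ with $0<r<1/2$, so its zero-extension $\tilde g$ lies in $\widehat{H}^r(\mathbb{R})$ and vanishes off $\Omega$. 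The second step is to pass from $\Omega$ to $\mathbb{R}$: since $\ldd{r}\tilde g$ is supported in $[a,\infty)$ and $\rdd{r}\tilde g$ in $(-\infty,b]$, their product vanishes outside $\overline\Omega$ and coincides there with $\rd{r}g\cdot\overline{\ld{r}g}$, whence $\blinear{u}{u}=\int_\mathbb{R}\rdd{r}\tilde g\cdot\overline{\ldd{r}\tilde g}\,dx$.

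The third step is a Plancherel computation. Using the Fourier symbols $(2\pi i\xi)^r$ and $(-2\pi i\xi)^r$ of the left and right Liouville derivatives and splitting the frequency integral at $\xi=0$, the branch of the power function contributes the phase $e^{\mp i r\pi}$ on $\{\xi\gtrless 0\}$, giving
\[
\blinear{u}{u}=e^{-ir\pi}P+e^{ir\pi}N,\qquad P,N\ge 0,
\]
where $P$ and $N$ are the nonnegative weighted energies of $\widehat{\tilde g}$ over the positive and negative frequencies. A nonnegative combination of $e^{-ir\pi}$ and $e^{ir\pi}$ with $2r=\beta-\alpha<1$ spans exactly the sector $|\Arg z|\le r\pi$, so $|\Arg\blinear{u}{u}|\le r\pi=\frac{|\beta-\alpha|}{2}\pi$, and the theorem follows.

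The main obstacle is the bookkeeping that makes the Plancherel step legitimate: one must know that $g=\rd{\alpha}u\in\hilbert{r}$, so that $\widehat{\tilde g}\in L^2$ with the correct decay and the frequency integrals $P,N$ are finite, and one must justify the replacement of $\int_\Omega$ by $\int_\mathbb{R}$ through the one-sided supports of the Liouville derivatives. The condition $r=\frac{|\beta-\alpha|}{2}<\frac12$, which follows from $0\le\alpha,\beta\le1$ and $1<\alpha+\beta<2$, is essential twice: it places $g$ in the regime where \Cref{lem:subtle-seminorm} applies, and it guarantees $2r\pi<\pi$ so that the two phases $e^{\pm ir\pi}$ span the sharp cone rather than a larger sector.
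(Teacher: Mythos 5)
Your proposal is correct, and it rests on the same pillars as the paper's own proof: reducing the claim to $\Arg\blinear{u}{u}$, using the semigroup property together with \Cref{lem:subtle-seminorm} to place $g=\rd{\alpha}u$ in $\hilbert{r}$ with $r=\tfrac{|\beta-\alpha|}{2}<\tfrac12$, and then a Plancherel computation with the symbols $(\pm2\pi i\xi)^r$ of the one-sided Liouville derivatives. Where you genuinely depart from the paper is in the choice of decomposition. The paper splits the eigenfunction in physical space, $u=f_1+if_2$ with $f_1,f_2$ real-valued, because its key identity \eqref{eq:wellknwon} is stated for real functions; it then computes $\Re\blinear{u}{u}=\blinear{f_1}{f_1}+\blinear{f_2}{f_2}=\cos(r\pi)E$ (essentially re-running the coercivity computation of \Cref{lemma:lemma1}) and separately bounds the cross terms $|\Im\blinear{u}{u}|\le\sin(r\pi)E$ via Plancherel, H\"older and Peter--Paul, finishing with an arctangent comparison. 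You instead keep $u$ complex and decompose in frequency space, splitting at $\xi=0$ to get the exact representation $\blinear{u}{u}=e^{-ir\pi}P+e^{ir\pi}N$ with $P,N\ge0$. This buys several things: you avoid the detour through real-valued functions and the auxiliary inequalities altogether; the paper's two estimates drop out as corollaries, since $\Re\blinear{u}{u}=(P+N)\cos(r\pi)$ and $\Im\blinear{u}{u}=(N-P)\sin(r\pi)$; and the geometry becomes transparent --- the argument of $\lambda$ reaches the boundary of the cone precisely when one of the two frequency energies $P,N$ vanishes, which hints at when the bound is sharp. What the paper's route buys in exchange is modularity, since its real-part computation is literally the coercivity estimate it must establish anyway for the Lax--Milgram step. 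Your two supporting justifications (the one-sided supports of $\ldd{r}\tilde g$ and $\rdd{r}\tilde g$ allowing the passage from $\int_\Omega$ to $\int_\mathbb{R}$, and membership $\tilde g\in\widehat H^r(\mathbb{R})$ making $P,N$ finite) are exactly the right points to check, and both hold.
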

\begin{proof}
	1. Suppose $u$ is an  eigenfunction corresponding to eigenvalue $\lambda$, then
	\[
	\blinear{u}{u}=\lambda \|u\|^2_\Omega.
	\]
	Since $u$ is nonzero, we observe
	\begin{equation}\label{eq:argument}
	\Arg \lambda=\Arg \frac{	\blinear{u}{u}}{\|u\|^2_\Omega}=\Arg \blinear{u}{u}.
	\end{equation}
	
	2. We rewrite $u=f_1+if_2$ with $f_1, f_2$ being real-valued functions. Then
	\[
	\blinear{u}{u}=\blinear{f_1}{f_1}+\blinear{f_2}{f_2}+i(\blinear{f_2}{f_1}-\blinear{f_1}{f_2}).
	\]
	By repeating a similar calculation as in the proof of \Cref{lemma:lemma1} for each of $\blinear{f_1}{f_1}$, $\blinear{f_2}{f_2}$, $\blinear{f_2}{f_1}$ and $\blinear{f_1}{f_2}$ and then adding, we obtain
	\begin{equation}\label{eq:expressions}
		\begin{split}
	&\blinear{u}{u}=\\
		&	
		\begin{cases}
			\cos(\frac{\beta-\alpha}{2}\pi)\left(\|\rdd{\frac{\beta-\alpha}{2}}\widetilde{	\rd{\alpha}f_1}\|^2_\mathbb{R}+\|\rdd{\frac{\beta-\alpha}{2}}\widetilde{	\rd{\alpha}f_2}\|^2_\mathbb{R}\right)+\\
			i\left(\int_{\mathbb{R}}\ldd{\frac{\beta-\alpha}{2}}\widetilde{	\rd{\alpha}f_1}\cdot\rdd{\frac{\beta-\alpha}{2}}\widetilde{	\rd{\alpha}f_2}\,dx-\int_{\mathbb{R}}\rdd{\frac{\beta-\alpha}{2}}\widetilde{	\rd{\alpha}f_1}\cdot\ldd{\frac{\beta-\alpha}{2}}\widetilde{\rd{\alpha}f_2}\,dx\right), & \beta>\alpha,\\
			\|\rd{\alpha}f_1\|^2_\Omega+\|\rd{\alpha}f_2\|^2_\Omega, &\beta=\alpha,\\
			\cos(\frac{\alpha-\beta}{2}\pi)	\left(\|\rdd{\frac{\alpha-\beta}{2}}\widetilde{\rd{\beta}f_1}\|^2_\mathbb{R}+\|\rdd{\frac{\alpha-\beta}{2}}\widetilde{\rd{\beta}f_2}\|^2_\mathbb{R}\right)+\\
			i\left(\int_{\mathbb{R}}\rdd{\frac{\alpha-\beta}{2}}\widetilde{\rd{\beta}f_1}\cdot\ldd{\frac{\alpha-\beta}{2}}\widetilde{\rd{\beta}f_2}\, dx-\int_{\mathbb{R}}\ldd{\frac{\alpha-\beta}{2}}\widetilde{\rd{\beta}f_1}\cdot\rdd{\frac{\alpha-\beta}{2}}\widetilde{\rd{\beta}f_2}\, dx  \right),
			& \beta<\alpha,
		\end{cases}
	\end{split}
	\end{equation}
	where the notation $\widetilde{\cdot}$ represents the zero extension as before.
	
	3. Recall \eqref{eq:argument}, it is clear from the above expresses in \eqref{eq:expressions} that, when $\beta=\alpha$, $\Arg \lambda =0$. In the next step, we only discuss $\Arg \lambda$ for the case $\beta>\alpha$ and the same conclusion can be obtained for the other case of $\beta<\alpha$.
	
	4. Let us estimate the imaginary part of $\blinear{u}{u}$, $\Im(\blinear{u}{u})$, for the case $\beta>\alpha$:
	\begin{equation}\label{eq:touseF}
		\int_{\mathbb{R}}\ldd{\frac{\beta-\alpha}{2}}\widetilde{	\rd{\alpha}f_1}\cdot\rdd{\frac{\beta-\alpha}{2}}\widetilde{	\rd{\alpha}f_2}\,dx-\int_{\mathbb{R}}\rdd{\frac{\beta-\alpha}{2}}\widetilde{	\rd{\alpha}f_1}\cdot\ldd{\frac{\beta-\alpha}{2}}\widetilde{\rd{\alpha}f_2}\,dx.
	\end{equation} 
Note with the aid of \Cref{lem:subtle-seminorm} that $\widetilde{	\rd{\alpha}f_1}, \widetilde{	\rd{\alpha}f_2}\in \hilbert{\frac{\beta-\alpha}{2}}$, hence, we apply the Plancherel's	formula to \eqref{eq:touseF} to arrive at
	\[
	\begin{aligned}
		&\big|\int_{\mathbb{R}}\ldd{\frac{\beta-\alpha}{2}}\widetilde{	\rd{\alpha}f_1}\cdot\rdd{\frac{\beta-\alpha}{2}}\widetilde{	\rd{\alpha}f_2}\,dx-\int_{\mathbb{R}}\rdd{\frac{\beta-\alpha}{2}}\widetilde{	\rd{\alpha}f_1}\cdot\ldd{\frac{\beta-\alpha}{2}}\widetilde{\rd{\alpha}f_2}\,dx\big |\\
		&=\big|\int_{\mathbb{R}}(2\pi i\xi)^{\beta-\alpha}\hat{g_2}\overline{\hat{g_1}}\,d\xi-\int_{\mathbb{R}}(-2\pi i\xi)^{\beta-\alpha}\hat{g_2}\overline{\hat{g_1}}\,d\xi\big |
	\end{aligned}
	\]
		where $g_1(x):=\widetilde{	\rd{\alpha}f_1}$, $g_2(x):= \widetilde{	\rd{\alpha}f_2}, x\in \mathbb{R}$, the Fourier transform is defined as  $\hat{f}(\xi):=\int_{\mathbb{R}} e^{-2\pi i x\xi}f(x) \,dx$ and  $(\mp i\xi)^{\sigma}$ is understood as the corresponding values of the main branch of the analytic function $z^{\sigma}$ in the complex plane with the cut along the positive half-axis, i.e. $(\mp i\xi)^{\sigma}=|\xi|^\sigma e^{\mp  \sigma \pi i \cdot \text{sign} (\xi)/2}$. 
Simplifying the terms leads to 
	\begin{equation}
		\begin{aligned}
			&=\big |\int_{\mathbb{R}}((2\pi i\xi)^{\beta-\alpha}-(-2\pi i\xi)^{\beta-\alpha})\hat{g_2}\overline{\hat{g_1}}\,d\xi\big |\\
			&=2\sin\frac{(\beta-\alpha)\pi}{2}\big|\int_{\mathbb{R}_+}|2\pi  \xi|^{\beta-\alpha}\hat{g_2}\overline{\hat{g_1}}\,d\xi-\int_{\mathbb{R}_-}|2\pi  \xi|^{\beta-\alpha}\hat{g_2}\overline{\hat{g_1}}\,d\xi\big|\\
			&\leq 2\sin\frac{(\beta-\alpha)\pi}{2}\int_{\mathbb{R}}||2\pi  \xi|^{\beta-\alpha}\hat{g_2}\overline{\hat{g_1}}| \, d\xi \\
			&=2\sin\frac{(\beta-\alpha)\pi}{2}\int_{\mathbb{R}}\widehat{\rdd{\frac{\beta-\alpha}{2}}g_2}\cdot\overline{\widehat{\rdd{\frac{\beta-\alpha}{2}}g_1} } \, d\xi 
		\end{aligned}
	\end{equation}
	by the second application of Plancherel's	formula and  H\"older inequality, this is
	\begin{equation}\label{eq:imaginarestimate}
		\begin{aligned}
			& \leq 2\sin\frac{(\beta-\alpha)\pi}{2}\cdot \|\rdd{\frac{\beta-\alpha}{2}}g_1\|_{L^2(\mathbb{R})}\cdot\|\rdd{\frac{\beta-\alpha}{2}}g_2\|_{L^2(\mathbb{R})}\\
			&\leq \sin\frac{(\beta-\alpha)\pi}{2} \left(\|\rdd{\frac{\beta-\alpha}{2}}g_1\|^2_{L^2(\mathbb{R})}+\|\rdd{\frac{\beta-\alpha}{2}}g_2\|^2_{L^2(\mathbb{R})}\right).
		\end{aligned}
	\end{equation}
By back substitution for  $g_1$ and $g_2$, we see
\[
=\sin\frac{(\beta-\alpha)\pi}{2} \left(\|\rdd{\frac{\beta-\alpha}{2}}\widetilde{	\rd{\alpha}f_1}\|^2_\mathbb{R}+\|\rdd{\frac{\beta-\alpha}{2}}\widetilde{	\rd{\alpha}f_2}\|^2_\mathbb{R}\right).
\]
Therefore, 
\[
\big|\Arg \lambda\big|=\big|\arctan \blinear{u}{u}\big|=\arctan \frac{\big|\Im(\blinear{u}{u})\big|}{\Re(\blinear{u}{u})} \leq \frac{(\beta-\alpha)\pi}{2}.
\]

5. Similarly, it can be checked  for the case $\beta<\alpha$ that
\[
\big|\Arg \lambda\big|\leq \frac{(\alpha-\beta)\pi}{2}.
\]
Combining all these three different cases, we thus have
\[
\big|\Arg \lambda\big|\leq \frac{|\alpha-\beta|\pi}{2},
\]
which closes the proof.
\end{proof}

\section{Principal eigenvalue}
In this section, we prove that there exists at least one real eigenvalue and the corresponding eigenfunction is strictly positive in $\Omega$. This is an analogy of the classic result of principal eigenvalue for nonsymmetric elliptic operators. The proof is constructive, following closely the original proof in Theorem 3, section 6.5.2, \cite{MR2597943}, which is based on the application of Schaefer's Fixed Point Theorem for convex sets, Hopf's Lemma and Maximum principle of fractional versions:
\begin{lemma}[Schaefer's Fixed Point Theorem]\label{the:schaefer}
	
	Let $T$ be a  continuous and compact mapping of a nonempty convex subset $K$ of a Banach space $X$ into itself, such that the set
	\[
	\{x\in K:x=\lambda Tx  \mbox{ for some } 0\leq \lambda \leq 1\}
	\]
	is bounded. Then $\displaystyle T$ has a fixed point in $K$.
\end{lemma}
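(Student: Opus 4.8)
The plan is to deduce this statement from Schauder's Fixed Point Theorem, which guarantees a fixed point for any continuous map of a nonempty, closed, bounded, convex set into itself whose image is relatively compact. The strategy is the classical radial truncation argument: since the solution set of $x=\lambda Tx$ is bounded, I can confine the action of $T$ to a large ball, apply Schauder there, and then show that the resulting fixed point cannot sit on the truncation boundary, so that it is a genuine fixed point of $T$ itself.

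First I would let $M$ denote an upper bound for $\{x\in K:x=\lambda Tx\text{ for some }0\le\lambda\le1\}$ and fix a radius $R>M$. Writing $\overline{B}_R=\{x\in X:\|x\|\le R\}$, I would introduce the radial retraction $\rho_R:X\to\overline{B}_R$ given by $\rho_R(x)=x$ when $\|x\|\le R$ and $\rho_R(x)=Rx/\|x\|$ otherwise; this map is continuous, and since each value $\rho_R(x)$ is a nonnegative scalar multiple (indeed a convex combination with the origin) of $x$, it maps $K$ into $K$ in our setting, where $K$ is a cone of nonnegative functions and hence contains $0$ and is closed under positive scaling. Consequently $K_R:=K\cap\overline{B}_R$ is a nonempty, closed, bounded, convex set. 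I would then consider the composition $\Phi:=\rho_R\circ T$ restricted to $K_R$: because $T$ maps $K$ into $K$ and is compact, while $\rho_R$ is continuous and carries $K$ into $K\cap\overline{B}_R=K_R$, the map $\Phi$ sends $K_R$ continuously into itself with relatively compact image. Schauder's theorem then supplies a fixed point $x^*\in K_R$, i.e. $\rho_R(Tx^*)=x^*$.

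The final and most delicate step is to upgrade this to a fixed point of $T$. If $\|Tx^*\|\le R$, then $\rho_R(Tx^*)=Tx^*$, so $x^*=Tx^*$ and we are done. Otherwise $\|Tx^*\|>R$, whence $x^*=\rho_R(Tx^*)=\lambda Tx^*$ with $\lambda=R/\|Tx^*\|\in(0,1)$; thus $x^*$ lies in the solution set of $x=\lambda Tx$, forcing $\|x^*\|\le M<R$. But $x^*=\rho_R(Tx^*)$ together with $\|Tx^*\|>R$ gives $\|x^*\|=R$, a contradiction. Hence the second case is impossible and $Tx^*=x^*$.

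I expect the main obstacle to be exactly this truncation bookkeeping: one must verify that the retraction keeps the orbit inside the convex set $K$ (which is where the cone structure, or more generally the hypothesis $0\in K$, is essential) and that the a priori bound $M$ cleanly rules out the boundary case $\|x^*\|=R$. By contrast, the continuity and relative compactness of $\Phi$ follow routinely from the corresponding properties of $T$ and the continuity of $\rho_R$, so the substance of the argument is concentrated in the reduction to Schauder's theorem and the contradiction at the last step.
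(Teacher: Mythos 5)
The paper itself gives no proof of this lemma: it is quoted as a classical result, with a pointer to Section 9.2.2 of Evans' book, and is only ever applied to the closed cone $C=\{u\in \hsss : u\geq 0\}$ in the proof of Theorem 5. Your radial-truncation reduction to Schauder's theorem is exactly the standard proof of Schaefer's theorem found in that reference, so there is nothing to compare against in the paper beyond the citation; your argument is correct, and the case analysis at the end (either $\|Tx^*\|\le R$ and $x^*$ is a genuine fixed point, or $x^*=\lambda Tx^*$ with $\lambda=R/\|Tx^*\|\in(0,1)$ forces the contradiction $R=\|x^*\|\le M<R$) is handled cleanly.

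One point deserves emphasis, and you half-noticed it yourself: the step $\rho_R(K)\subset K$ requires $0\in K$ (convexity then gives $(R/\|x\|)x\in[0,x]\subset K$), and Schauder additionally needs $K\cap\overline{B}_R$ closed, hence $K$ closed. Neither hypothesis appears in the lemma as stated, and without them the statement is in fact false: take $K=\{(x,y)\in\mathbb{R}^2 : y\geq 1\}$ and $T(x,y)=(x+1,1)$, a continuous compact self-map of the closed convex set $K$ whose solution set $\{z\in K : z=\lambda Tz,\ 0\le\lambda\le 1\}$ is empty (hence bounded), yet $T$ has no fixed point. So your proof does not establish the lemma in the generality claimed — no proof could — but it does establish it under the side conditions ($K$ closed, convex, containing $0$) that are necessary for its truth and that hold for the cone $C$ to which the paper applies it. It would strengthen your write-up to state these conditions as explicit hypotheses rather than as features "of our setting."
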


\begin{lemma}[Maximum principle]\label{lem:maximum}
Suppose $u\in \hilbert{\frac{\alpha+\beta}{2}}$ is a true solution of	\begin{equation}\label{eq:intermidiateprob}
	\begin{cases}
		Au=f \quad\text{in} \quad \Omega,\quad f\in \LL,\\
		u(a)=u(b)=0.
	\end{cases}
\end{equation}
If $f\geq 0$ a.e., then $u(x)\geq0$ in $\Omega$; furthermore, if $f$ is a nonzero function, then the strict inequality holds, i.e., $u(x)>0$ in $\Omega$.
\end{lemma}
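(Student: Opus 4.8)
The plan is to avoid the usual variational route (testing the weak form against the negative part $u^{-}$), which is awkward for these nonlocal, nonsymmetric operators, and instead to exploit the explicit solution formula already recorded in \Cref{lemma:lemma2}. Since $u$ is the unique true solution of \eqref{eq:intermidiateprob}, it is given by \eqref{eq:solution}, namely $u=\ri{\beta}\big(\li{\alpha}f+c\,\ld{1-\alpha}1\big)$ with $c$ as in \eqref{eq:constant}. The Riemann--Liouville integrals $\li{\alpha}$ and $\ri{\beta}$ have nonnegative kernels $\tfrac{1}{\Gamma(\alpha)}(x-\tau)^{\alpha-1}$ and $\tfrac{1}{\Gamma(\beta)}(\tau-x)^{\beta-1}$, so they map nonnegative functions to nonnegative functions; this immediately makes $\ri{\beta}\li{\alpha}f\ge 0$ when $f\ge 0$. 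The difficulty is that the corrective term does \emph{not} have a favorable sign: $\ld{1-\alpha}1=\tfrac{1}{\Gamma(\alpha)}(x-a)^{\alpha-1}\ge 0$, whereas $c\le 0$ because, by \eqref{eq:constant}, $c$ is a negative multiple of $(\ri{\beta}\li{\alpha}f)(a)\ge 0$. Thus $\li{\alpha}f+c\,\ld{1-\alpha}1$ is not sign-definite — it tends to $-\infty$ as $x\to a^{+}$ — and positivity of $u$ cannot be read off term by term. The resolution is to assemble the whole solution map into one integral operator and study the sign of its kernel.

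Concretely, I would rewrite \eqref{eq:solution} as $u=Q-\tfrac{Q(a)}{P(a)}P$, where $Q:=\ri{\beta}\li{\alpha}f$ and $P:=\ri{\beta}(x-a)^{\alpha-1}$; this matches \eqref{eq:solution}--\eqref{eq:constant} and makes the boundary conditions transparent, since $Q(b)=P(b)=0$ forces $u(b)=0$, while the coefficient is chosen precisely so that $u(a)=0$. Applying Fubini to $Q$ gives $Q(x)=\int_a^b K(x,y)f(y)\,dy$ with
\[
K(x,y)=\frac{1}{\Gamma(\alpha)\Gamma(\beta)}\int_{\max(x,y)}^{b}(\tau-x)^{\beta-1}(\tau-y)^{\alpha-1}\,d\tau\ \ge\ 0 ,
\]
and, observing that $P(x)=\Gamma(\alpha)\,K(x,a)$, the solution takes the form $u(x)=\int_a^b G(x,y)f(y)\,dy$ with Green's function
\[
G(x,y)=K(x,y)-\frac{K(x,a)}{K(a,a)}\,K(a,y).
\]
The weak maximum principle is then exactly the assertion $G\ge 0$, and the strong version is $G(x,y)>0$ for $x,y\in\Omega$.

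The main obstacle is the nonnegativity of $G$. As $K(a,a)>0$, the inequality $G\ge 0$ is equivalent to the determinant condition
\[
K(x,y)\,K(a,a)\ \ge\ K(x,a)\,K(a,y),\qquad a\le x,\ a\le y ,
\]
i.e. to a total-positivity (TP$_2$) property of the kernel $K$ along the distinguished row and column through $a$. I expect this to be the crux: although $K$ is a composition $\int m_1(x,\tau)m_2(\tau,y)\,d\tau$ of the truncated power kernels $m_1=(\tau-x)^{\beta-1}_{+}$ and $m_2=(\tau-y)^{\alpha-1}_{+}$, the naive Cauchy--Binet argument fails because these kernels are only piecewise sign-regular (the truncation at $\max(x,y)$ destroys a uniform sign for the $2\times2$ minors), so the determinant inequality must be proved by a genuine case analysis separating $x\le y$ and $x>y$ and tracking the sign of the resulting boundary terms. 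This is precisely the fractional maximum principle and Hopf lemma for $A$ established in \cite{MR4414583, YLisubmitted}, which I would invoke to conclude $G\ge 0$, hence $u\ge 0$ whenever $f\ge 0$. Finally, for the strict statement one checks that the above inequality is \emph{strict} for $x,y$ in the open interval, so $G(x,y)>0$ there; consequently, if $f\ge 0$ is not identically zero (so $f>0$ on a set of positive measure), then $u(x)=\int_a^b G(x,y)f(y)\,dy>0$ for every $x\in\Omega$, giving the claimed strict positivity.
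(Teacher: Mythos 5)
First, a fact that frames the whole comparison: the paper never proves \Cref{lem:maximum}. It states the lemma and immediately defers, writing that ``for more general statements of \Cref{lem:maximum} and \Cref{lemma:Hopf} and their proofs, we refer to \cite{YLisubmitted}'' (the introduction likewise credits \cite{MR4414583, YLisubmitted}). So there is no internal argument to match against; the benchmark is a bare citation. Against that benchmark, your reduction is genuinely different scaffolding, and its algebra is correct: since $\ld{1-\alpha}1=(x-a)^{\alpha-1}/\Gamma(\alpha)$ and $P(a)=\ri{\beta}(x-a)^{\alpha-1}\big|_{x=a}=\frac{(b-a)^{\alpha+\beta-1}}{\Gamma(\beta)(\alpha+\beta-1)}$, your rewriting $u=Q-\frac{Q(a)}{P(a)}P$ with $Q=\ri{\beta}\li{\alpha}f$ reproduces \eqref{eq:solution}--\eqref{eq:constant} exactly; the Fubini computation of $K$, the identity $P=\Gamma(\alpha)K(\cdot,a)$, the positivity $K(a,a)>0$ (using $\alpha+\beta>1$), and the passage from $G>0$ on $\Omega\times\Omega$ to strict positivity of $u$ are all sound. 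This Green's-function formulation is a nice, more explicit packaging than anything in the paper, and it correctly isolates where the difficulty lives.

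The problem is what happens at that difficulty. You reduce the lemma to the determinant inequality $K(x,y)K(a,a)\ge K(x,a)K(a,y)$, concede that the naive total-positivity argument fails, and then invoke ``the fractional maximum principle and Hopf lemma for $A$ established in \cite{MR4414583, YLisubmitted}'' to conclude $G\ge 0$. But that cited result \emph{is} the statement being proved. If you are permitted to cite it, then $u\ge 0$ (and $u>0$ for nonzero $f\ge 0$) follows directly and the entire Green's-function apparatus is logically idle --- your proof collapses to the same citation the paper makes, with extra steps. If you are not permitted to cite it, your argument has a hole precisely at the crux, plus a second one at the strong form: ``one checks that the above inequality is strict'' is asserted, never executed, and $G(x,y)>0$ for all interior $x,y$ is not an obvious consequence of $G\ge 0$. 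So the proposal should be graded as correct-by-citation (on the same footing as the paper) rather than as an independent proof. To make your route self-contained, the substantive task would be to prove the determinant inequality directly --- e.g., by evaluating $\int_{\max(x,y)}^{b}(\tau-x)^{\beta-1}(\tau-y)^{\alpha-1}\,d\tau$ in closed (incomplete-Beta) form and establishing monotonicity of the ratio $K(x,y)/K(x,a)$ in $y$ --- which is exactly the work the external references carry out by other means, and which your write-up correctly identifies but does not do.
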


\begin{lemma}[Hopf's Lemma]\label{lemma:Hopf}
	Suppose $\li{\alpha}f\in C^1(\overline{\Omega})$ and $f$ being real-valued. Then the solution $u$ in \Cref{lem:maximum} is differentiable and satisfies
	\begin{equation}
		\lim_{x\to a^+}u'(x)=+\infty,\quad\lim_{x\to b^-}u'(x)=-\infty.
	\end{equation}
\end{lemma}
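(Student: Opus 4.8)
The plan is to work directly from the closed form in \Cref{lemma:lemma2}. Using $\ld{1-\alpha}1=(x-a)^{\alpha-1}/\Gamma(\alpha)$, the solution becomes $u=\ri{\beta}g$ with $g=\li{\alpha}f+c'(x-a)^{\alpha-1}$ and $c'=c/\Gamma(\alpha)$. Since $f\ge 0$ is nonzero, $\ri{\beta}\li{\alpha}f\big|_{x=a}=\frac{1}{\Gamma(\beta)}\int_a^b(\xi-a)^{\beta-1}\li{\alpha}f\,d\xi>0$, so \eqref{eq:constant} forces $c'<0$. On the open interval $u$ is $C^1$ (standard for such integral representations) with $u'=D\ri{\beta}g=-\rd{1-\beta}g$, and it remains to evaluate the one-sided limits of $u'$ at the two endpoints; the blow-up will come from the leading singular term in the local expansion of $u$ at each endpoint, which I would extract separately.

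Near $x=b$ both summands of $g$ are $C^1$ (the hypothesis $\li{\alpha}f\in C^1(\overline{\Omega})$ covers the first, and $(x-a)^{\alpha-1}$ is smooth away from $a$), so $g(b)$ is finite and $u(x)=\frac{1}{\Gamma(\beta)}\int_x^b(\xi-x)^{\beta-1}g(\xi)\,d\xi\sim\frac{g(b)}{\Gamma(\beta+1)}(b-x)^{\beta}$, giving $u'(x)\sim-\frac{g(b)}{\Gamma(\beta)}(b-x)^{\beta-1}$. Thus $u'(b^-)=-\infty$ reduces to the strict positivity $g(b)>0$. I would prove this by writing $g(b)$ as a linear functional $g(b)=\int_a^b f(\xi)\kappa(\xi)\,d\xi$, where after a Fubini interchange $\kappa(\xi)=\frac{1}{\Gamma(\alpha)}\big[(b-\xi)^{\alpha-1}-\frac{\alpha+\beta-1}{(b-a)^{\beta}}\int_\xi^b(\eta-a)^{\beta-1}(\eta-\xi)^{\alpha-1}\,d\eta\big]$. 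Substituting $\eta=\xi+(b-\xi)\tau$ and using $(\xi-a)+(b-\xi)\tau\ge(b-\xi)\tau$ together with $\beta-1<0$ yields $\kappa(\xi)\ge\frac{(b-\xi)^{\alpha-1}}{\Gamma(\alpha)}\big[1-(\tfrac{b-\xi}{b-a})^{\beta}\big]>0$ on $(a,b)$, whence $g(b)>0$ for every nonzero $f\ge 0$.

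Near $x=a$ the smooth part $\ri{\beta}[\li{\alpha}f]$ is Lipschitz (its derivative $-\rd{1-\beta}[\li{\alpha}f]$ stays bounded at the far endpoint $a$), contributing a constant plus $O(x-a)$. The singular part is governed, after the scaling $\xi-a=(x-a)r$, by $\ri{\beta}[(\cdot-a)^{\alpha-1}](x)=\frac{(x-a)^{\alpha+\beta-1}}{\Gamma(\beta)}\int_1^{(b-a)/(x-a)}(r-1)^{\beta-1}r^{\alpha-1}\,dr$. As $x\to a^+$ the upper limit tends to $\infty$ and, since $\alpha+\beta>1$, the integral grows like $\frac{1}{\alpha+\beta-1}(\tfrac{b-a}{x-a})^{\alpha+\beta-1}$, producing a constant; its finite part is computed by analytically continuing $\int_1^\infty(r-1)^{\beta-1}r^{-s}\,dr=\Gamma(\beta)\Gamma(s-\beta)/\Gamma(s)$ to $s=1-\alpha$, giving the expansion $\text{const}+\frac{C_0}{\Gamma(\beta)}(x-a)^{\alpha+\beta-1}+\cdots$ with $C_0=\frac{\Gamma(\beta)\Gamma(1-\alpha-\beta)}{\Gamma(1-\alpha)}<0$ (here $\Gamma(1-\alpha-\beta)<0$ because $1-\alpha-\beta\in(-1,0)$). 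The two constant terms cancel exactly because \eqref{eq:constant} enforces $u(a)=0$, leaving $u(x)\sim A_1(x-a)^{\alpha+\beta-1}$ with $A_1=c'C_0/\Gamma(\beta)>0$ (a product of two negatives); since $\alpha+\beta-1\in(0,1)$ this dominates the $O(x-a)$ remainder, so $u'(x)\sim A_1(\alpha+\beta-1)(x-a)^{\alpha+\beta-2}\to+\infty$.

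The main obstacle is the near-$a$ analysis: one must rigorously justify that the right-sided integral $\ri{\beta}$ applied to the left-based singularity $(x-a)^{\alpha-1}$ yields exactly the fractional power $(x-a)^{\alpha+\beta-1}$ with a computable, correctly signed coefficient, which is what forces the finite-part (Beta-function / analytic-continuation) evaluation above together with uniform control of the subdominant $O(x-a)$ remainders. Securing the strict positivity of \emph{both} leading coefficients, $g(b)>0$ at $b$ and $A_1>0$ at $a$, is what guarantees genuine blow-up rather than a finite one-sided derivative, and this is precisely where the signs $c'<0$, $C_0<0$ and the kernel estimate $\kappa>0$ are indispensable.
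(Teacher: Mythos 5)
The paper never proves \Cref{lemma:Hopf}: it is imported from the companion work \cite{YLisubmitted} (``for more general statements of \Cref{lem:maximum} and \Cref{lemma:Hopf} and their proofs, we refer to\dots''), so there is no internal argument to compare yours against. Judged on its own, your route --- reading off the endpoint behaviour from the closed form \eqref{eq:solution}--\eqref{eq:constant} of \Cref{lemma:lemma2} --- is sound, and the key quantities all check out: $\ld{1-\alpha}1=(x-a)^{\alpha-1}/\Gamma(\alpha)$; $c'<0$ since $\ri{\beta}\li{\alpha}f\big|_{x=a}>0$ for nonzero $f\geq 0$; your kernel bound $\kappa(\xi)\geq \frac{(b-\xi)^{\alpha-1}}{\Gamma(\alpha)}\bigl[1-\bigl(\tfrac{b-\xi}{b-a}\bigr)^{\beta}\bigr]>0$ is correct (upper-bounding $\int_\xi^b(\eta-a)^{\beta-1}(\eta-\xi)^{\alpha-1}\,d\eta$ by $\tfrac{(b-\xi)^{\alpha+\beta-1}}{\alpha+\beta-1}$ via $\beta-1<0$), so $g(b)>0$ and $u'(b^-)=-\infty$ follows; the regularized constant $C_0=\Gamma(\beta)\Gamma(1-\alpha-\beta)/\Gamma(1-\alpha)<0$ is the right analytic continuation (the limit $\lim_{R\to\infty}\bigl[\int_1^R(r-1)^{\beta-1}r^{\alpha-1}dr-\tfrac{R^{\alpha+\beta-1}}{\alpha+\beta-1}\bigr]$ exists because $(r-1)^{\beta-1}r^{\alpha-1}-r^{\alpha+\beta-2}=O(r^{\alpha+\beta-3})$ and $\alpha+\beta<2$); and \eqref{eq:constant} does make the two constants cancel, leaving $A_1=c'C_0/\Gamma(\beta)>0$. (One implicit restriction: the whole statement and your proof require $0<\alpha,\beta<1$; e.g.\ for $\beta=1$ the derivative at $b$ stays finite.)

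The one step you should not leave in its current form is inferring the expansion of $u'$ from that of $u$: asymptotic expansions cannot in general be differentiated term by term, and you yourself flag this as the main obstacle. It closes cleanly if you differentiate the exact scaled identity instead. Write $\rho=x-a$, $\Phi(R)=\int_1^R(r-1)^{\beta-1}r^{\alpha-1}\,dr$, so that $\ri{\beta}\bigl[(\cdot-a)^{\alpha-1}\bigr](x)=\frac{\rho^{\alpha+\beta-1}}{\Gamma(\beta)}\Phi\bigl(\tfrac{b-a}{\rho}\bigr)$. Differentiating in $\rho$, the chain-rule term equals $-\frac{(b-a)^{\alpha+\beta-1}}{\Gamma(\beta)\rho}\bigl(1-\tfrac{\rho}{b-a}\bigr)^{\beta-1}$, while the term from $\rho^{\alpha+\beta-1}$ equals $\frac{(b-a)^{\alpha+\beta-1}}{\Gamma(\beta)\rho}+\frac{(\alpha+\beta-1)C_0}{\Gamma(\beta)}\rho^{\alpha+\beta-2}+o(\rho^{\alpha+\beta-2})$; the two $1/\rho$ singularities cancel to $O(1)$, leaving $u'(x)=\frac{(\alpha+\beta-1)\,c'C_0}{\Gamma(\beta)}\,\rho^{\alpha+\beta-2}+O(1)\to+\infty$, since the bounded part $\ri{\beta}[\li{\alpha}f]$ has derivative $-\rd{1-\beta}[\li{\alpha}f]$ bounded near $a$. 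The analogous direct differentiation (integration by parts in $\ri{\beta}g$, using $g\in C^1$ on $[x_0,b]$) rigorizes the $b$-side. With these substitutions your sketch is a complete, self-contained proof --- a genuine addition, given that the paper delegates this lemma entirely to an external reference.
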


For a related discussion on Schaefer's Fixed Point Theorem, we refer to section 9.2.2, \cite{MR2597943}, while for more general statements of \Cref{lem:maximum} and \Cref{lemma:Hopf} and their proofs,  we refer to \cite{YLisubmitted}. Let us only mention that the crucial difference between classic and fractional-order Hopf's lemmas is not that directional derivatives at the boundary are finite, but rather that the derivatives blow up at the endpoints in fractional-order Hopf's lemma.  Now we state the main result.
\begin{theorem}\label{the:theorem-principal}
	There exists a positive real eigenvalue $\lambda$ for problem \eqref{eq:pde} and the corresponding eigenfunction is strictly positive in $\Omega$. 
\end{theorem}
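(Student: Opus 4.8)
The plan is to realize the principal eigenvalue as the reciprocal of an eigenvalue of the solution operator and to produce the corresponding eigenpair by a fixed-point argument in the positive cone, mirroring the classical nonsymmetric elliptic theory. First I would package \Cref{lemma:lemma2} into a single solution operator $S:\llo\to\hsss\hookrightarrow\llo$, $Sf:=u$, where $u$ is the unique true solution of \eqref{eq:weaksolutionforf}. Three properties of $S$ drive everything: (i) $S$ is \emph{bounded}, which is the estimate \eqref{eq:estimate}; (ii) $S$ is \emph{compact} on $\llo$, since it factors through $\hsss$ with $s=\frac{\alpha+\beta}{2}\in(1/2,1)$ and the embedding $\hsss\hookrightarrow\llo$ is compact (Rellich-type); and (iii) $S$ is \emph{strongly positive}, which is precisely \Cref{lem:maximum}: if $f\ge 0$ a.e. and $f\not\equiv 0$, then $Sf>0$ pointwise in $\Omega$.

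Next I would set up the fixed-point scheme on a convex set, following Theorem 3 of Section 6.5.2 in \cite{MR2597943}. Take $K:=\{u\in\llo:\ u\ge 0\text{ a.e.},\ \int_\Omega u\,dx=1\}$, which is nonempty, closed and convex, being the intersection of the positive cone with an affine hyperplane. For $u\in K$ one has $Su>0$ in $\Omega$ by strong positivity, hence $\int_\Omega Su\,dx>0$, so the normalized map
\[
T(u):=\frac{Su}{\int_\Omega Su\,dx}
\]
sends $K$ into $K$. A fixed point $u^\ast=T(u^\ast)$ satisfies $Su^\ast=\mu\,u^\ast$ with $\mu:=\int_\Omega Su^\ast\,dx>0$, i.e.\ $Au^\ast=\lambda u^\ast$ with $\lambda:=1/\mu>0$; since $u^\ast\in K$ is nonnegative and nonzero, a final application of the strict maximum principle in \Cref{lem:maximum} to $Au^\ast=\lambda u^\ast\ge 0$ upgrades this to $u^\ast>0$ in $\Omega$, yielding the strictly positive principal eigenfunction. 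The positivity and reality of $\lambda$ are of course consistent with the cone $|\Arg\lambda|\le\frac{|\beta-\alpha|\pi}{2}$ from \Cref{theorem:range}.

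To produce the fixed point I would invoke \Cref{the:schaefer}. Continuity and compactness of $T$ on $K$ follow from the compactness of $S$ together with a \emph{uniform} lower bound $\int_\Omega Su\,dx\ge c_0>0$ for $u\in K$, while the Leray--Schauder set $\{u\in K:\ u=\theta Tu,\ 0\le\theta\le 1\}$ is controlled by integrating the relation over $\Omega$ (which forces $\theta=1$ since $\int_\Omega Tu\,dx=1$) and then invoking the a priori estimate \eqref{eq:estimate}. I expect the main obstacle to be exactly the well-definedness and continuity of the normalization, i.e.\ ruling out $\int_\Omega Su\,dx\to 0$ along $K$, which demands a \emph{quantitative} form of positivity rather than mere nonnegativity. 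This is where \Cref{lemma:Hopf} enters: it forces $Su$ to be strictly positive in the interior with controlled boundary behaviour, so that $S$ keeps $K$ inside a fixed order interval bounded away from $0$. The genuinely fractional difficulty is that the fractional Hopf lemma gives \emph{blow-up} of the boundary derivatives of $Su$ rather than the finite, sign-definite normal derivative used in the classical argument; adapting the cone so that $T$ remains compact and admits no fixed point on the boundary of $K$ is the crux, and the step I would expect to require the most care.
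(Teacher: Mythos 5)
Your step 1 (the solution operator $S=A^{-1}$ is bounded, compact on $\llo$, and positivity-preserving) coincides with the paper's, but from there you take a genuinely different route — a Krein–Rutman-style normalized fixed point on the slice $K=\{u\ge 0,\ \int_\Omega u\,dx=1\}$ — and that route, as written, has a genuine gap located exactly at the step you flag as the crux.

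The uniform lower bound $\int_\Omega Su\,dx\ge c_0>0$ on $K$ is not merely the hard step: it is false. Take $u_n=n\chi_{(a,a+1/n)}\in K$. Writing $\int_\Omega Su_n\,dx=(Su_n,1)_\Omega=(u_n,w)_\Omega$, where $w\in\hsss$ solves the adjoint problem $\ld{\beta}\rd{\alpha}w=1$ (an operator of the same class with $\alpha$ and $\beta$ interchanged, so \Cref{lemma:lemma2} and \Cref{lem:maximum} apply to it), and noting that $w$ is H\"older continuous on $\overline{\Omega}$ (as an element of $\hsss$ with $s=\tfrac{\alpha+\beta}{2}>\tfrac12$) with $w(a)=0$, we get $(u_n,w)_\Omega\le \sup_{(a,a+1/n)}w\to 0$. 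The same degeneration already occurs for the Laplacian, where $\int_0^1 G(x,y)\,dx=\tfrac{y(1-y)}{2}\to 0$ as the source point $y$ approaches the boundary. \Cref{lemma:Hopf} cannot rescue this for two reasons: the constant it yields depends on the datum, so it gives nothing uniform over $K$; and its hypothesis $\li{\alpha}f\in C^1(\overline{\Omega})$ fails for generic $u\in K\subset\llo$, so you are not even entitled to apply it to $Su$ for arbitrary $u\in K$. Without the uniform bound, precompactness of $T(K)$ over the (unbounded) slice $K$ is unproven, and establishing it directly would require Green-function-level estimates for $A$ that are nowhere available in the paper.

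There is also a structural problem with invoking \Cref{the:schaefer} on your $K$. Since $\int_\Omega \theta\,Tu\,dx=\theta\ne 1$ for $\theta<1$, the homotopy $\theta Tu$ exits $K$, the Leray–Schauder set collapses to the fixed-point set (possibly empty, hence bounded), and the retraction proof behind Schaefer's theorem — which needs the set to be star-shaped about the origin — does not go through. Indeed the statement is false for affine slices: in $\mathbb{R}^2$, the translation $T(x,1)=(x+1,1)$ on $K=\mathbb{R}\times\{1\}$ is continuous, compact, has empty Leray–Schauder set (the second coordinate forces $\theta=1$), yet has no fixed point. This is exactly the geometry of your $K$, where the constraint $\int u=1$ forces $\theta=1$. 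The paper's proof is engineered to avoid both pitfalls: it applies the maximum principle and Hopf's lemma \emph{once}, to a single fixed $g\in C_0^\infty(\Omega)$ (for which $\li{\alpha}g\in C^1(\overline{\Omega})$ does hold), to obtain one comparison pair $(\aleph,g)$ with $A^{-1}g\ge g/\aleph$ (the claim \eqref{eq:claim}); it then applies Schaefer on the genuine cone $C=\{u\ge0\}$ — which \emph{is} star-shaped about $0$, so the theorem is legitimately available — to the perturbed maps $u\mapsto tA^{-1}[u+\epsilon g]$, rules out fixed points at $t=\aleph+\epsilon$ by the geometric iteration $u\ge\big(\tfrac{\aleph+\epsilon}{\aleph}\big)^n\epsilon g$, concludes the solution sets $G_\epsilon$ are unbounded, and finally renormalizes and extracts the eigenpair in the limit $\epsilon\to0$ by compactness. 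Any repair of your scheme (e.g., perturbing $Su$ to $Su+\epsilon g$ to keep the denominator away from zero, then letting $\epsilon\to0$) reintroduces precisely this fixed comparison pair and limiting argument, i.e., leads back to the paper's proof rather than providing an alternative to it.
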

\begin{proof}
1.  We consider real-valued functions throughout the proof. Let us define the linear operator $A^{-1}:\LL\rightarrow \hilbert{s}$, $s=\frac{\alpha+\beta}{2}$ as
		\begin{equation}\label{eq:operator}
		A^{-1}: f\rightarrow u, \quad f\in L^2(\Omega),
		\end{equation}
		where $u$ is the corresponding weak solution (therefore, true solution) of \eqref{eq:intermidiateprob}. Since $A^{-1}$ is bounded from $\LL$ to $\hilbert{s}$ by \eqref{eq:estimate} and $\hilbert{s}$ is compactly embedded into $\LL$, we have that $A^{-1}$: 
 $\LL \rightarrow \LL$ is a compact operator. On the other hand, if we  focus on the cone
\[
C:=\{u\in \hsss|u\geq 0 , x \in \Omega\},
\]
we further have that $A^{-1}$ maps $C$ into $ C$, according to maximum principle in \Cref{lem:maximum}. 

2.  We claim that
\begin{equation}\label{eq:claim}
\text{there exist $\aleph>0$ and a function $g\in \hsss$ such that  $A^{-1}g\geq \frac{1}{\aleph} g, x\in \Omega$}.
\end{equation}
In particular, we  choose a function $g\in C_0^\infty(\Omega)$  such that 
\[
g\in C, g\not\equiv 0.
\]
Then by applying the maximum principle and Hopf's lemma, we obtain
\[
\tilde{g}:=A^{-1}g>0 , x\in \Omega, \lim_{x\rightarrow a^+}D \tilde{g}=+\infty,\lim_{x\rightarrow b^-}D \tilde{g}=-\infty.
\]
This implies that there is a certain real constant $\aleph>0$ so that
\begin{equation}\label{eq:biggerthan}
 \tilde{g}\geq \frac{1}{\aleph} g, x\in \Omega,
\end{equation}
hence the claim holds. 

3. Hereafter, we pick and fix an arbitrary pair $(\aleph, g)$ satisfying the claim \eqref{eq:claim}.  We next assert that the set
\[
G_\epsilon:=\{u\in C|\text{there exists $0\leq t\leq \aleph+\epsilon$ such that $u=tA^{-1}[u+\epsilon g]$}\}
\]
is unbounded in $\hsss$ for any fixed $\epsilon>0$. For otherwise we could apply Schaefer's Fixed Point Theorem~\ref{the:schaefer} to deduce that for a certain $\epsilon>0$ the equation
\begin{equation}\label{eq:fixedpoint}
u=(\aleph+\epsilon)A^{-1}[u+\epsilon g]
\end{equation}
has a solution $u\in C$. However, this is impossible. To verify this, suppose~\eqref{eq:fixedpoint} holds,  using \eqref{eq:biggerthan} we compute
\begin{equation}\label{eq:uinequality}
u= (\aleph+\epsilon)A^{-1}[u]+(\aleph+\epsilon)A^{-1}[\epsilon g] \geq (\aleph+\epsilon)A^{-1}[\epsilon g]\geq \frac{\aleph+\epsilon}{\aleph}\epsilon g.
\end{equation}
Hence, \eqref{eq:uinequality} in turn yields
\[
u\geq(\aleph+\epsilon)A^{-1}[u]\geq \frac{(\aleph+\epsilon)^2}{\aleph}A^{-1}[\epsilon g]\geq \left(\frac{\aleph+\epsilon}{\aleph}\right)^2\epsilon g.
\]
Repeating this process, we inductively obtain
\[
u\geq \left(\frac{\aleph+\epsilon}{\aleph}\right)^n\epsilon g
\]
for any integer $n$. Since $\left(\frac{\aleph+\epsilon}{\aleph}\right)^n\rightarrow +\infty$ as $n\rightarrow +\infty$, it contradicts the boundedness of $u$ in $\Omega$.

4.  Owing to the unboundedness of $G_\epsilon$ and the arbitrariness of $\epsilon>0$,  we could let $\epsilon \rightarrow 0$. Then accordingly there exist $\{t_\epsilon\}$ and $\{v_\epsilon\}$ such that
\begin{equation}\label{eq:cha-1}
0\leq t_\epsilon\leq \aleph+\epsilon, v_\epsilon\in C \quad \text{and} \quad \|v_\epsilon\|_{\hsss}\rightarrow +\infty,
\end{equation}
satisfying
\[
v_\epsilon=t_\epsilon A^{-1}[v_\epsilon+\epsilon g].
\]
Therefore, we construct a bounded sequence $\{u_\epsilon\}$ by renormalizing
\[
u_\epsilon:=\frac{v_\epsilon}{\|v_\epsilon\|_{\hsss}}.
\]
Using the compactness of  intervals $[0, \aleph+\epsilon]$ and  the compactness of $A^{-1} $, we obtain convergent subsequences
\begin{equation}\label{eq:cha-2}
t_{\epsilon_n}\rightarrow t, \quad \text{and}\quad u_{\epsilon_n}\rightarrow u \quad \text{in}\quad \hsss,
\end{equation}
satisfying
\[
u_{\epsilon_n}=t_{\epsilon_n} A^{-1}[u_{\epsilon_n}+\epsilon_n \frac{g}{\|v_\epsilon\|_{\hsss}}].
\]
We deduce in limit that $u=tA^{-1}u$, which reads
	\begin{equation}
\begin{cases}
Au=tu \quad\text{in} \quad \Omega,\\
u(a)=u(b)=0.
\end{cases}
\end{equation}
Clearly, $0<t\leq \aleph$, we see the desired results by letting $\lambda=t$. Apparently, by our construction, the eigenfunction $u$ is strictly positive in $\Omega$.
\end{proof}

Herein, $\lambda$ may or may not be the smallest one among all the (real and complex) eigenvalues in absolute value, remaining as an open question.  It is not even clear whether there exists a complex eigenvalue with non-zero imaginary part. Furthermore, the simplicity of the eigenvalues also remains unknown.

\section{A numerical study}
This section is devoted to the study of numerical spectral approximation of the nonsymmetric elliptic operator defined in \eqref{eq:pde}. The variational formulation developed in \Cref{sec:variation} provides a direct way of numerical computation for approximating the eigenpairs. 
In particular, we adopt the widely-used finite element method (FEM) to solve the problem \eqref{eq:pde}. 
The method reaches a mature stage and we refer to, for example, \cite{c2002,ern2004theory,br2008} for its development. 
In this section, we first present the linear FEM for problem \eqref{eq:pde} to reach the matrix eigenvalue problem.
We then show various numerical examples to demonstrate the spectral properties of the fractional diffusion operator.

\subsection{The linear finite element method}

The variational formulation of~\eqref{eq:pde} at the continuous level is to find the eigenpair $(\lambda \in \mathbb{C}, u \in \hsss) $ with $\|u\|_\Omega=1$ such that
\begin{equation} \label{eq:vf}
B[u, w] =  \lambda (u, w)_\Omega, \quad \forall \ w \in\hsss, 
\end{equation}
where the sesquilinear form is defined in \eqref{eq:bilinear}.
The eigenvalue problem~\eqref{eq:vf} is equivalent to the original problem~\eqref{eq:pde}.  
They have a countable set of eigenvalues and an associated set of eigenfunctions $\{ u_j\}_{j=1}^\infty$. 
When $\alpha=\beta$, the bilinear form is symmetric and both matrices $K$ and $M$ are symmetric. 
In this case, the eigenfunctions can be orthnormalized, 
meaning $ (u_j, u_k)_\Omega = \delta_{jk}, $ where $\delta_{jk} =1$ is the Kronecker delta.  
In our numerical study, we always sort the eigenvalues accordingly to their magnitudes in ascending order.
Each eigenvalue is paired with the corresponding eigenfunctions.

At the discrete level, we first partition the domain $\Omega = [0,1]$ into a grid with nodes $\{0=x_0, x_1, x_2, \cdots, x_N = 1\}$.
Let $\tau_j = [x_{j-1}, x_j]$ and $\mathcal{T}_h$ denote a general element and its collection, respectively, such that $\overline \Omega = \cup_{\tau \in \mathcal{T}_h} \tau$.  
Also, let $h_\tau = \text{diameter}(\tau)$ and $h = \max_{\tau \in \mathcal{T}_h} h_\tau$.  
For simplicity, we use a mesh with $N$ uniform elements.  
Thus, $h=1/N$ and $x_j = jh, j=0,1,\cdots, N$.
With this in mind, the linear FEM space is 
\begin{equation}
V^h = \{ w \in C^0(\Omega): w|_\tau \in \mathbb{P}_1(\tau), w(0)=w(1)= 0, \forall \tau \in \mathcal{T}_h \},
\end{equation}
where $\mathbb{P}_1$ is a space of linear polynomials. 
The linear FEM of~\eqref{eq:pde} or equivalently~\eqref{eq:vf} seeks an eigenvalue $\lambda^h$ and its associated eigenfunction $u^h \in V^h$ with $\| u^h \|_\Omega = 1$ such that
\begin{equation} \label{eq:vfh}
B[u^h, w^h] =  \lambda^h (u^h, w^h)_\Omega, \quad \forall \ w^h \in V^h.
\end{equation} 

The linear FEM space is characterized by a set of basis functions which are the so-called ``hat" functions $\phi_j, j=1,2,\cdots, N-1$ (the first one when $j=0$ and the last one when $j=N$ are removed to satisfy the homogeneous boundary condition). Thus, $V^h = \{ \phi_j \}_{j=1}^{N-1}$.
With this characterization, an eigenfunction can be written as a linear combination of the basis functions, i.e.,
\begin{equation}
u^h =  \sum_{j=1}^{N-1} U_j \phi_j.
\end{equation}
At the algebraic level, substituting all the basis functions for $w^h$ in~\eqref{eq:vfh} leads to the generalized matrix eigenvalue problem (GMEVP)
 \begin{equation} \label{eq:mevp}
  KU = \lambda^h MU,
\end{equation}
where $K_{kl} = B[\phi_l, \phi_k], M_{kl} =  (\phi_l, \phi_k)_\Omega,$ and $U$ is the eigenvector representing the coefficients of the basis functions.  
In general, the matrix $K$ is dense while the mass matrix $M$ is sparse.
Once the matrix eigenvalue problem~\eqref{eq:mevp} is solved, we sort the eigenpairs in ascending order.

\begin{figure}[h!]
\centering
\includegraphics[height=8cm]{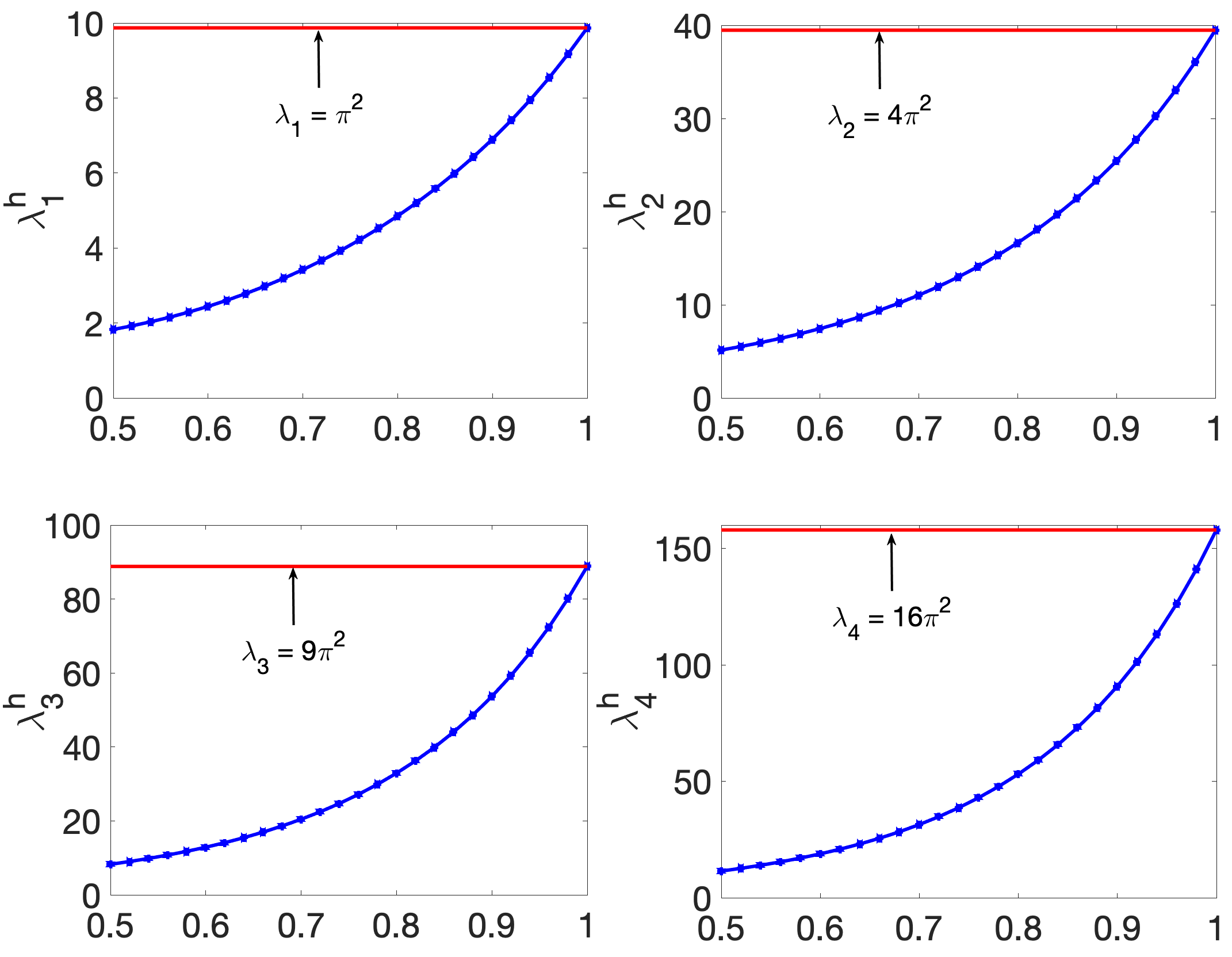} 
\caption{The first four eigenvalues $\lambda_j^h, j=1,2,3,4$ of \eqref{eq:pde} with respect to $\alpha = \beta \in [0.5, 1]$.}
\label{fig:ev1234}
\end{figure}

\begin{figure}[h!]
\centering
\includegraphics[height=8cm]{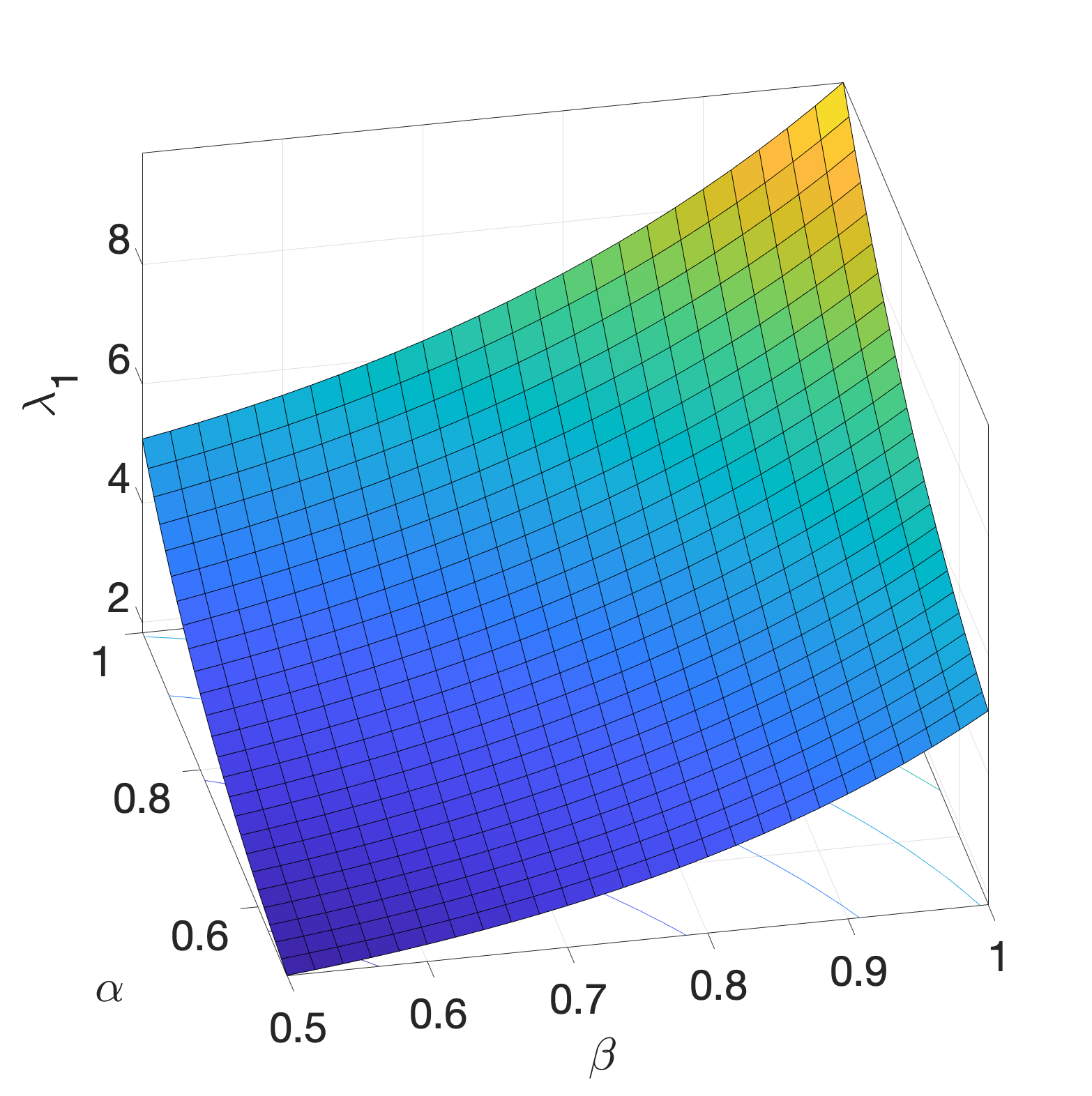} 
\vspace{-0.6cm}
\caption{The first eigenvalue $\lambda_1$ of \eqref{eq:pde} with respect to $(\alpha, \beta) \in [0.5, 1] \times [0.5, 1]$ with a grid of size $25 \times 25$.}
\label{fig:evs}
\end{figure}

\begin{figure}[h!]
\centering
\includegraphics[height=8cm]{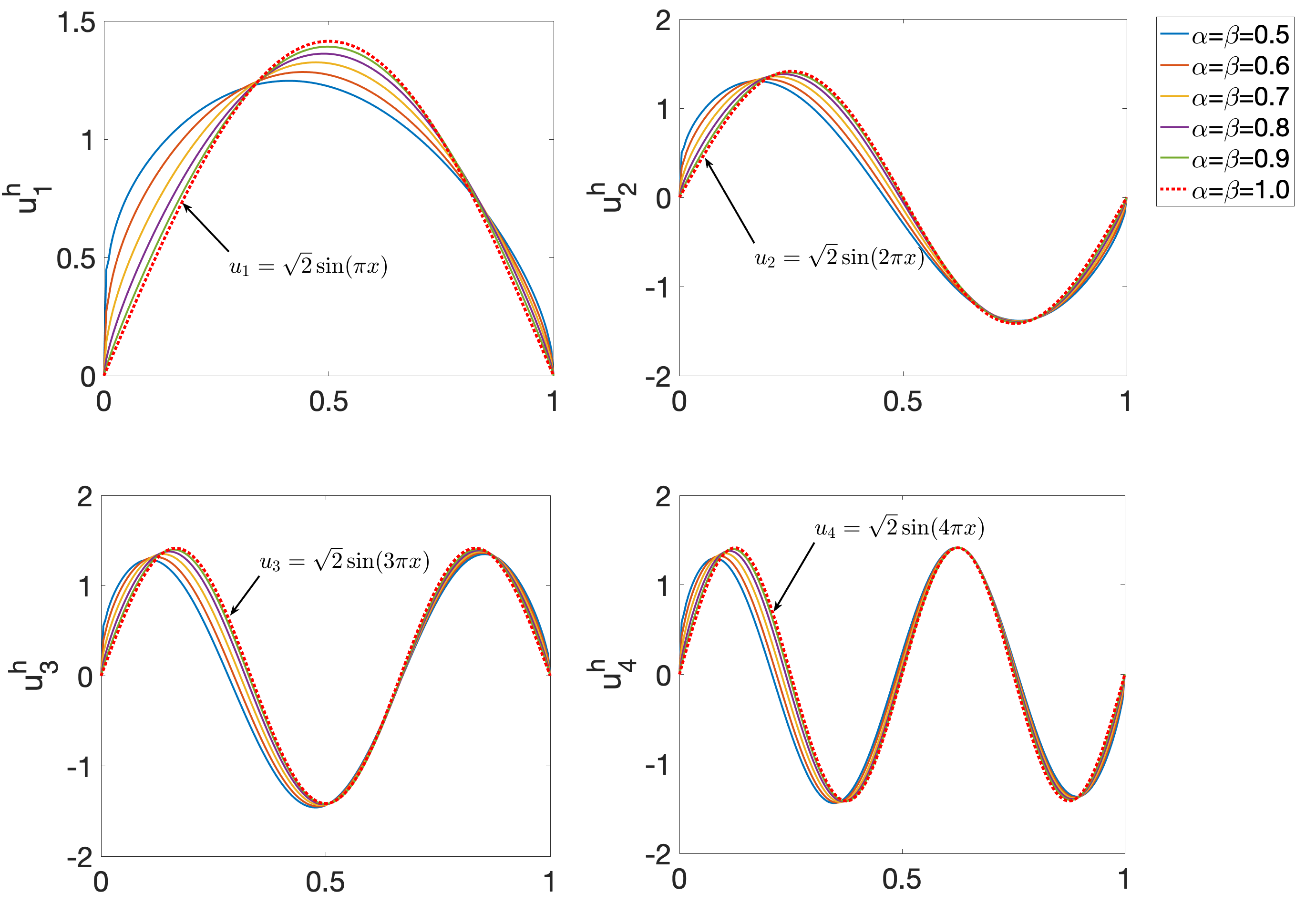} 
\caption{The first four eigenfunctions $u_j^h, j=1,2,3,4$ of \eqref{eq:pde} with respect to $\alpha = \beta \in [0.5, 1]$.}
\label{fig:efs}
\end{figure}

\subsection{The Laplace operator as a limiting case when $\alpha, \beta \to 1$}

When $\alpha = \beta = 1$, the  problem \eqref{eq:pde} reduces to the Laplacian eigenvalue problem,
which has analytical eigenpairs 
$$
(\lambda_j, u_j) = (j^2 \pi^2, \sqrt{2} \sin(j\pi x)), \qquad j =1,2,\cdots.
$$
Figure \ref{fig:ev1234} shows that when $\alpha= \beta \to 1$, the first four eigenvalues $\lambda_j \to j^2\pi^2, j=1,2,3,4$.
Herein, we use linear FEM with $200$ uniform elements. 
Figure \ref{fig:evs} shows the first eigenvalue $\lambda_1$ with respect to various values of $\alpha$ and $\beta$.
When $\alpha=\beta$, the numerical eigenvalues are real and positive.
Also, the first eigenvalue is real and positive for any values of  $\alpha$ and $\beta$.
For eigenfunctions, Figure \ref{fig:efs} shows that when $\alpha= \beta \to 1$, the first four eigenfunctions $u_j^h \to u_j = \sqrt{2} \sin(j\pi x), j=1,2,3,4$.
The first eigenfunction is always real.
This numerical spectral observation confirms our expectation that the fractional diffusion operator $A$ defined in \eqref{eq:pde} converges to the Laplacian  as $\alpha, \beta \to 1$.

\subsection{The distribution of eigenvalues}

We now take a closer look at the distribution of eigenvalues. 
Since linear FEM is a numerical method for solving the problem \eqref{eq:pde}, it has numerical errors. 
We use a fine mesh with $N=400$ elements to help reduce the errors.
The errors increase from small to large eigenvalues. 
We thus equally divide the overall spectra into three regions: numerically accurate region (blue), transitional region (green), and numerically inaccurate region (red).

\begin{figure}[h!]
\centering
\includegraphics[height=8cm]{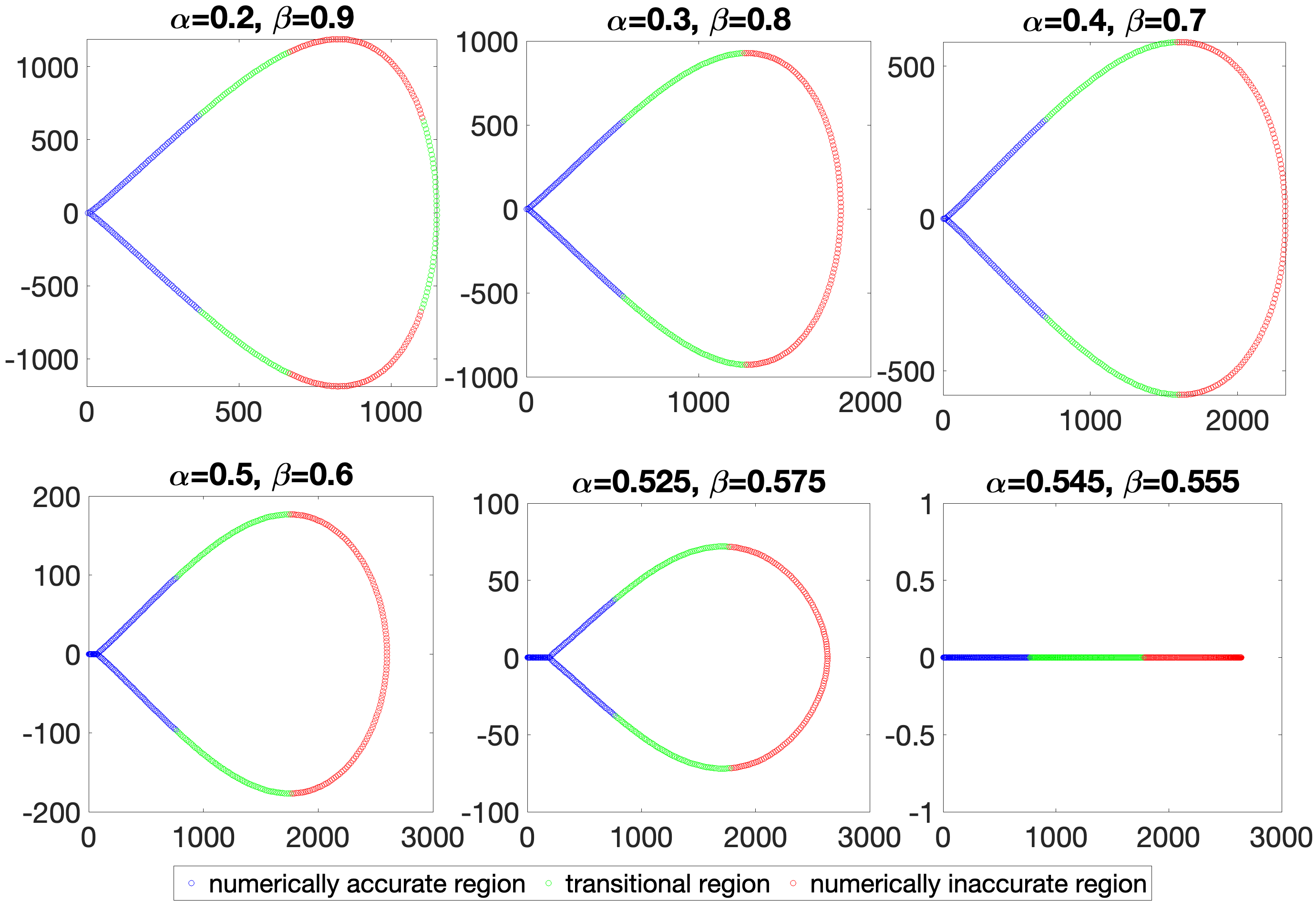} 
\caption{The distribution of eigenvalues for different values of $\alpha$ and $\beta$ such that $\alpha + \beta = 1.1.$ }
\label{fig:evdis1}
\end{figure}

\begin{figure}[h!]
\centering
\includegraphics[height=8cm]{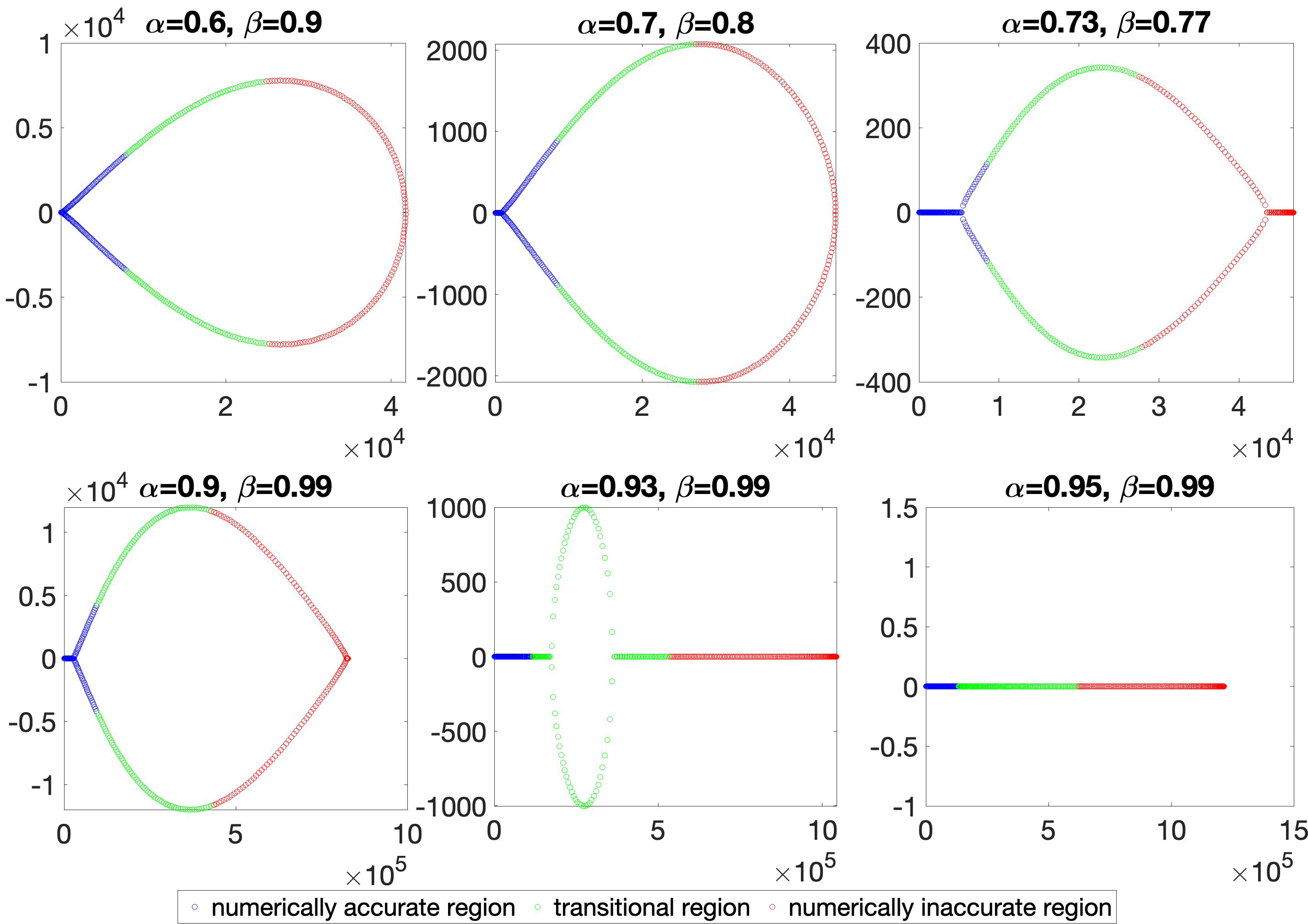} 
\caption{The distribution of eigenvalues for different values of $\alpha$ and $\beta$ such that $\alpha + \beta = 1.5$ (top plots) and $\beta=0.99$ (bottom plots). }
\label{fig:evdis2}
\end{figure}

Figure \ref{fig:evdis1} shows the distribution of eigenvalues when $\alpha + \beta = 1.1$ with different values of $\alpha$ and $\beta$. 
Figure \ref{fig:evdis2} shows the cases when $\alpha + \beta = 1.5$ and   $\beta$ is fixed.
The horizontal axis represents the real part while the vertical axis represents the imaginary part.

The first eigenvalues are positive and all real parts of the eigenvalues are positive as well. 
When $| \alpha- \beta| \to 0$, we can observe that more real and positive eigenvalues appear in the spectra. 
In particular, Figure \ref{fig:evdis3} shows the first 100 eigenvalues (all numerically accurate) when using different values of $\alpha$ and $\beta$ for each fixed sum $\alpha + \beta = 1.1, 1.2, 1.4, 1.6, 1.8, 1.96$.
 
For the case $\alpha + \beta = 1.2$, Figure \ref{fig:evdis4} shows the bounds of the angles given by $\frac{|\alpha -\beta| \pi}{2}$.
We observe that all the eigenvalues are bounded by these angles, which confirms our Theorem \ref{theorem:range}.

\begin{figure}[h!]
\centering
\includegraphics[height=7cm]{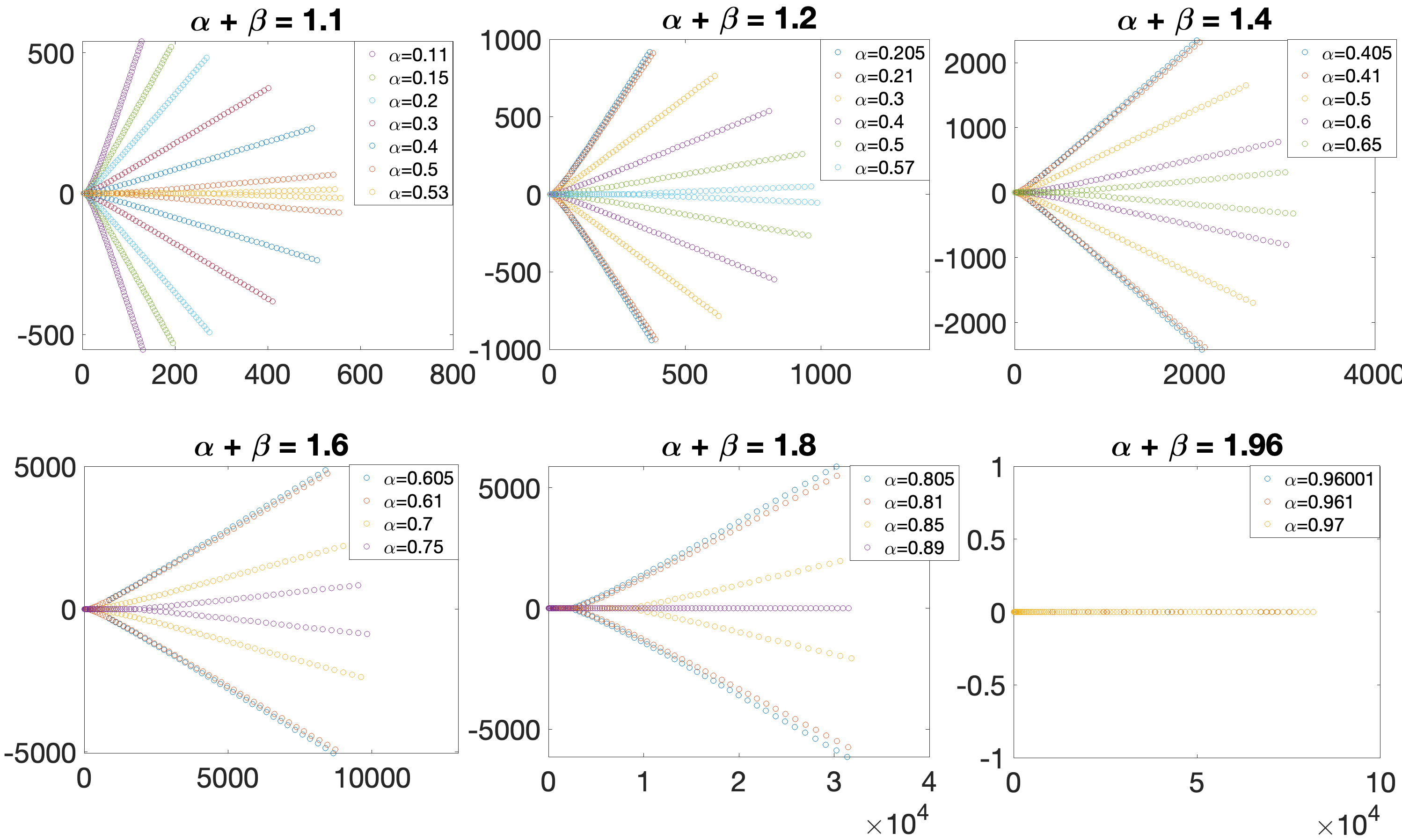} 
\caption{The distribution of the first 100 eigenvalues for different values of $\alpha$ and $\beta$ such that $\alpha + \beta = 1.1, 1.2, 1.4, 1.6, 1.8, 1.96$. }
\label{fig:evdis3}
\end{figure}

\begin{figure}[h!]
\centering
\includegraphics[height=7cm]{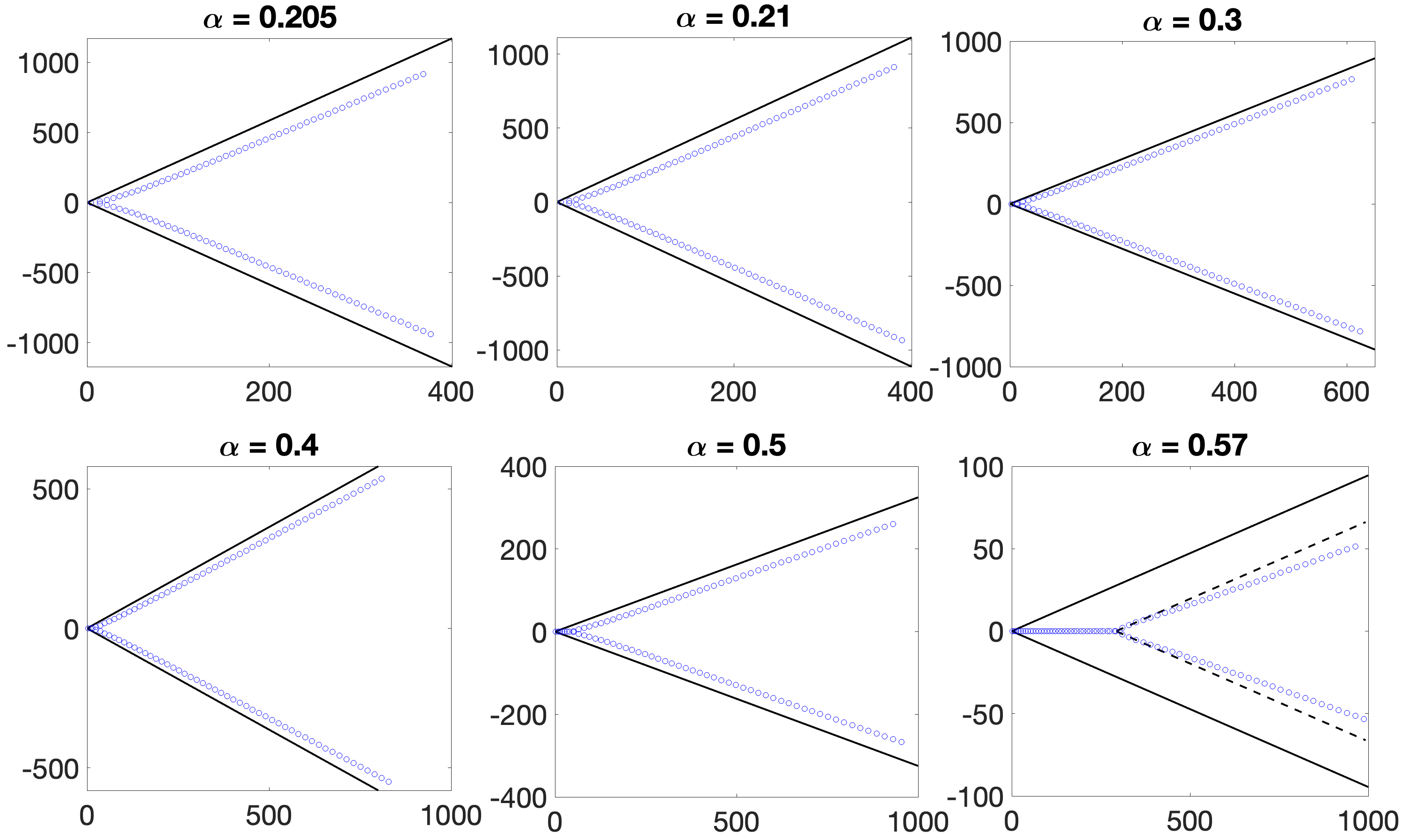} 
\caption{The distribution of the first 100 eigenvalues for different values of $\alpha$ and $\beta$ such that $\alpha + \beta = 1.2$. The solid black lines represent the angle given by $\frac{|\alpha -\beta| \pi}{2}$. The dotted black lines in the last plot are a horizontal right shift with a scale of 292.45 (the largest real eigenvalue before complex branches) of the solid black lines.}
\label{fig:evdis4}
\end{figure}

\subsection{The complex eigenfunctions}

To study the eigenfunctions, we fix the mesh with $N=200$ uniform elements. 
Figures \ref{fig:ef1}--\ref{fig:ef4} show the eigenfunctions $u_j^h, j=1,2,3,11,21,31$ for different values of $\alpha$.
Therein, we fix $\beta=0.9.$ 
For the case when $\alpha=0.2$, only the first eigenfunction is real and
all other eigenfunctions are complex. 
As $\alpha$ increases, we start seeing more and more real eigenfunctions. 
The number of real eigenfunctions with respect to $\alpha$ and $\beta$ is shown in Figure \ref{fig:nev}.
When the eigenfunction is real, its corresponding eigenvalue is also real.
Although, for any $1<\alpha + \beta \le 2$, it can be observed numerically that there exists at least one real eigenpair, an instructive observation is that more and more real eigenpairs appear (however, always finite) when either the sum $\alpha + \beta$ increases or $\alpha$ and $\beta$ are getting closer, i.e. $|\alpha-\beta| \to 0$.

\begin{figure}[h!]
\hspace{-1cm}
\includegraphics[height=7cm]{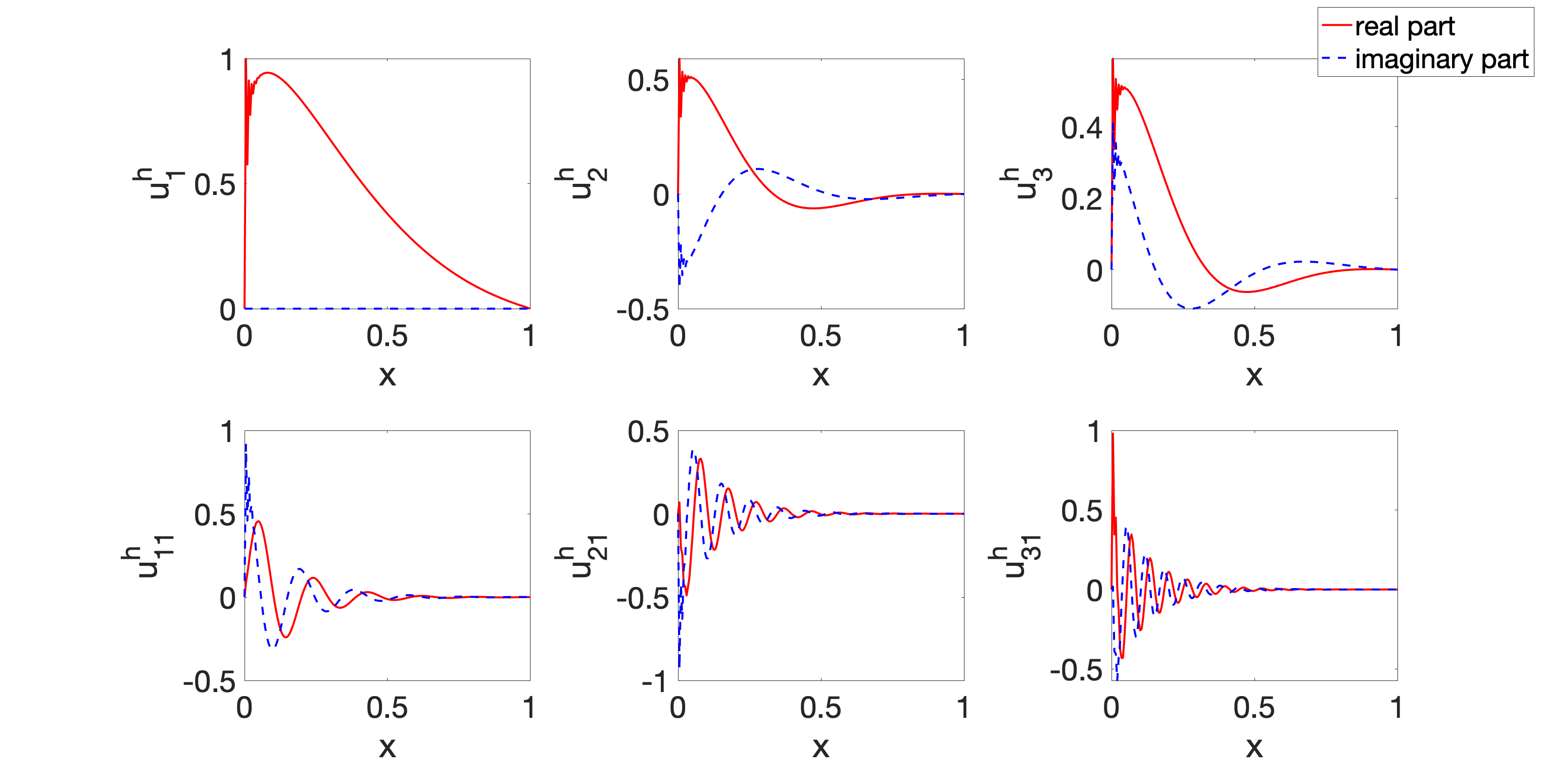} 
\caption{Several eigenfunctions when using linear FEM with 200 uniform elements with $\alpha=0.2$ and $\beta=0.9$. }
\label{fig:ef1}
\end{figure}

\begin{figure}[h!]
\hspace{-1cm}
\includegraphics[height=7cm]{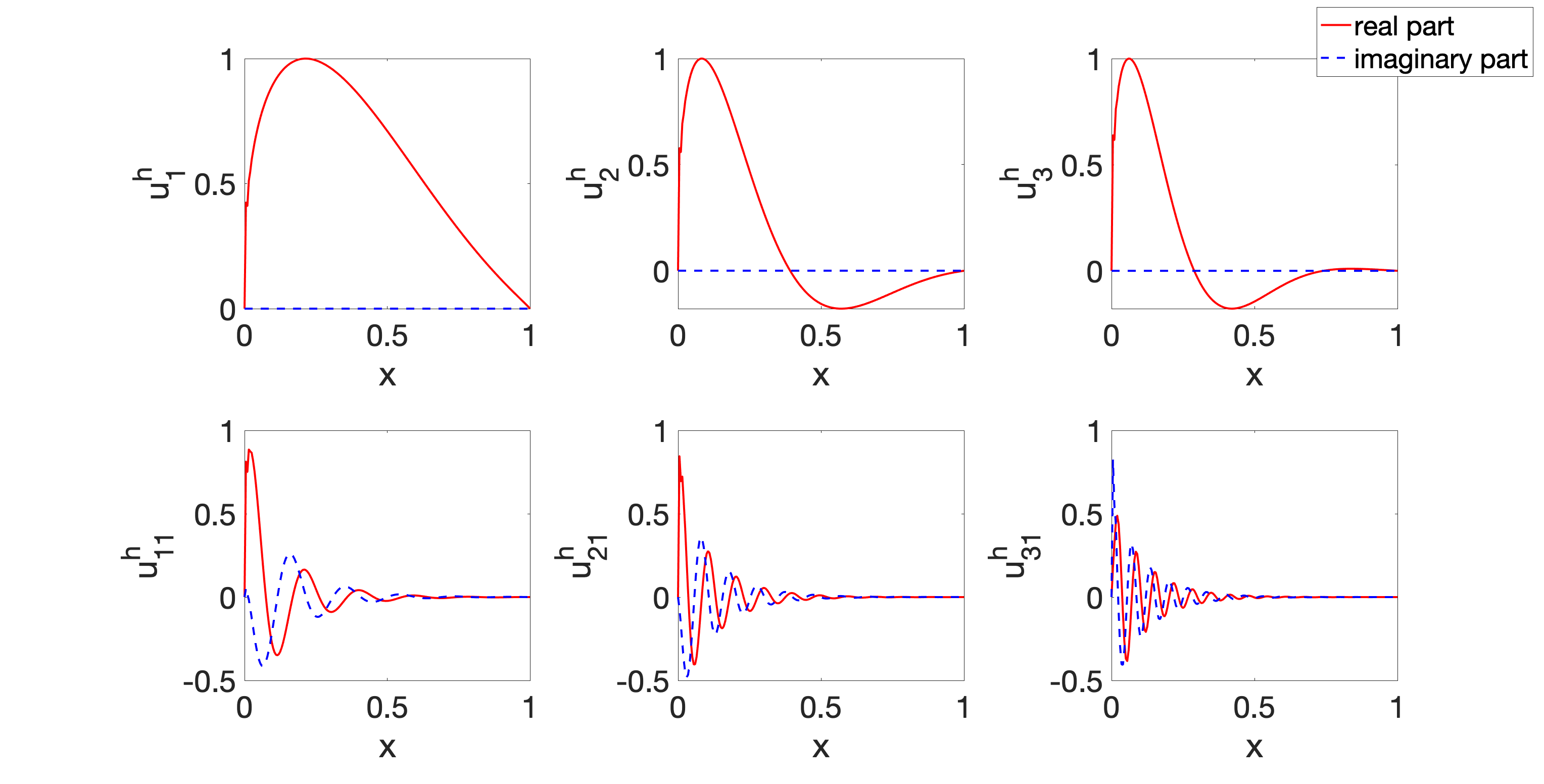} 
\caption{Several eigenfunctions when using linear FEM with 200 uniform elements with $\alpha=0.4$ and $\beta=0.9$. }
\label{fig:ef2}
\end{figure}

\begin{figure}[h!]
\hspace{-1cm}
\includegraphics[height=7cm]{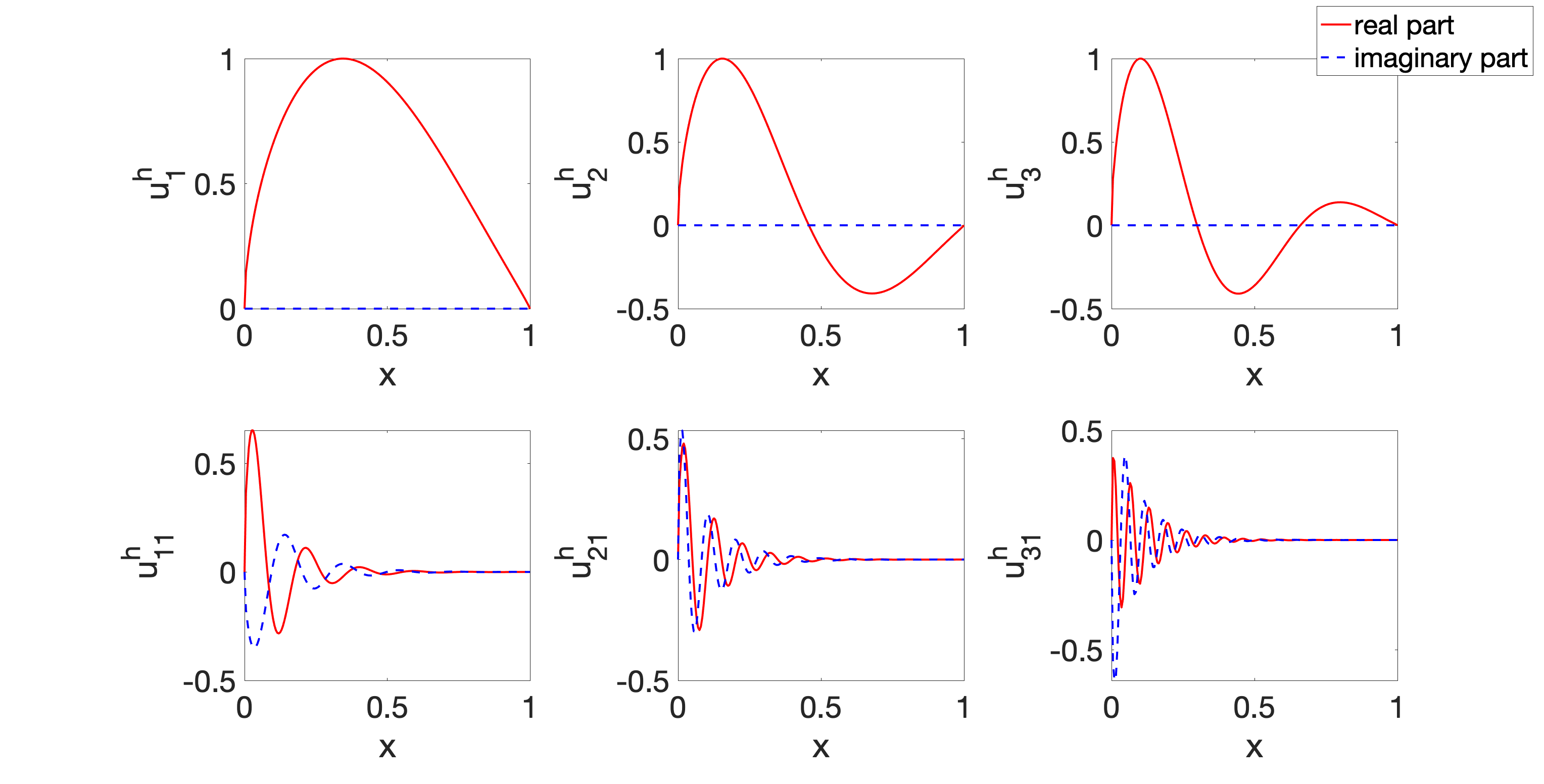} 
\caption{Several eigenfunctions when using linear FEM with 200 uniform elements with $\alpha=0.6$ and $\beta=0.9$. }
\label{fig:ef3}
\end{figure}

\begin{figure}[h!]
\hspace{-1cm}
\includegraphics[height=7cm]{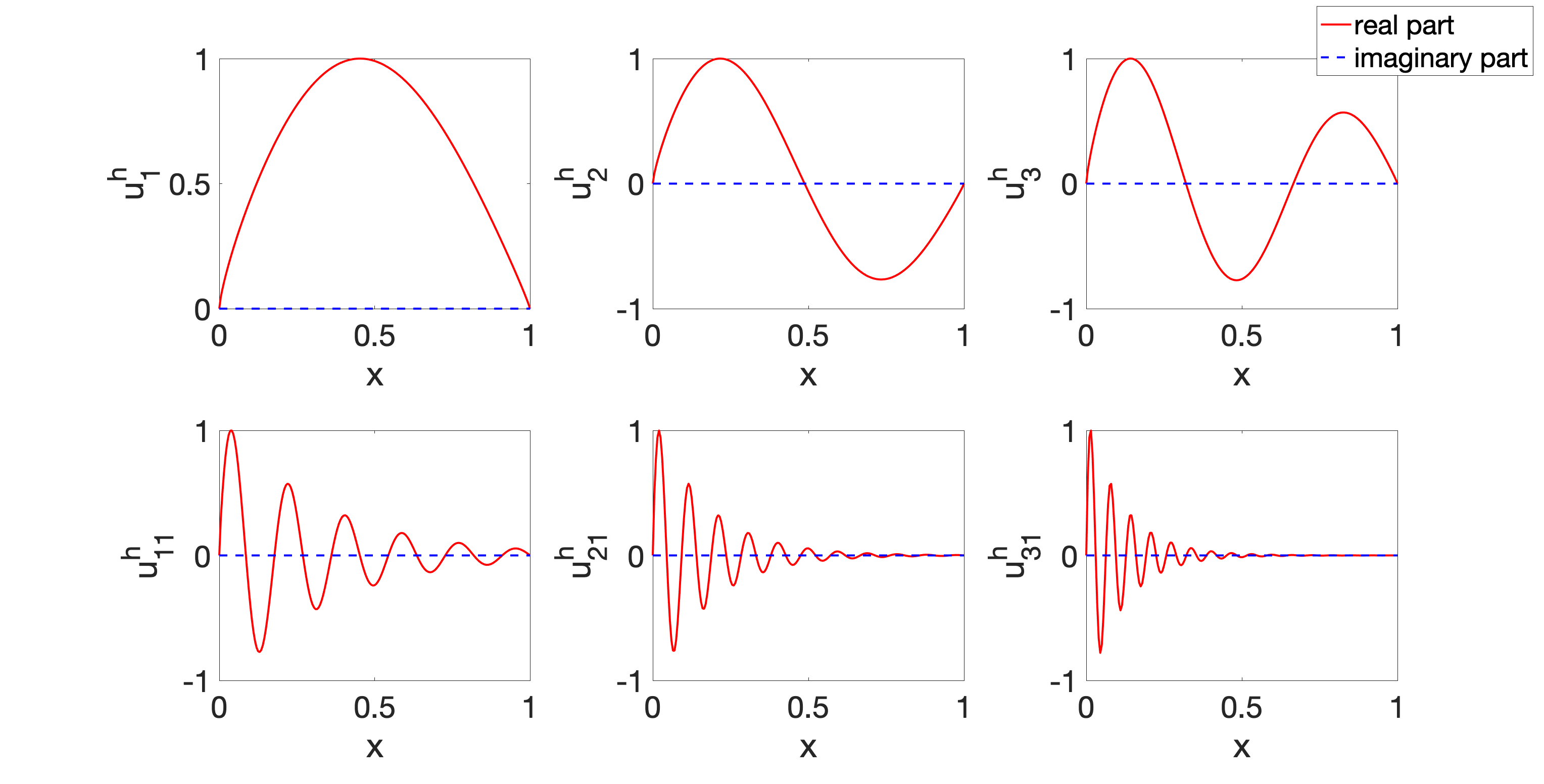} 
\caption{Several eigenfunctions when using linear FEM with 200 uniform elements with $\alpha=0.8$ and $\beta=0.9$. }
\label{fig:ef4}
\end{figure}

\begin{figure}[h!]
\centering
\includegraphics[height=7cm]{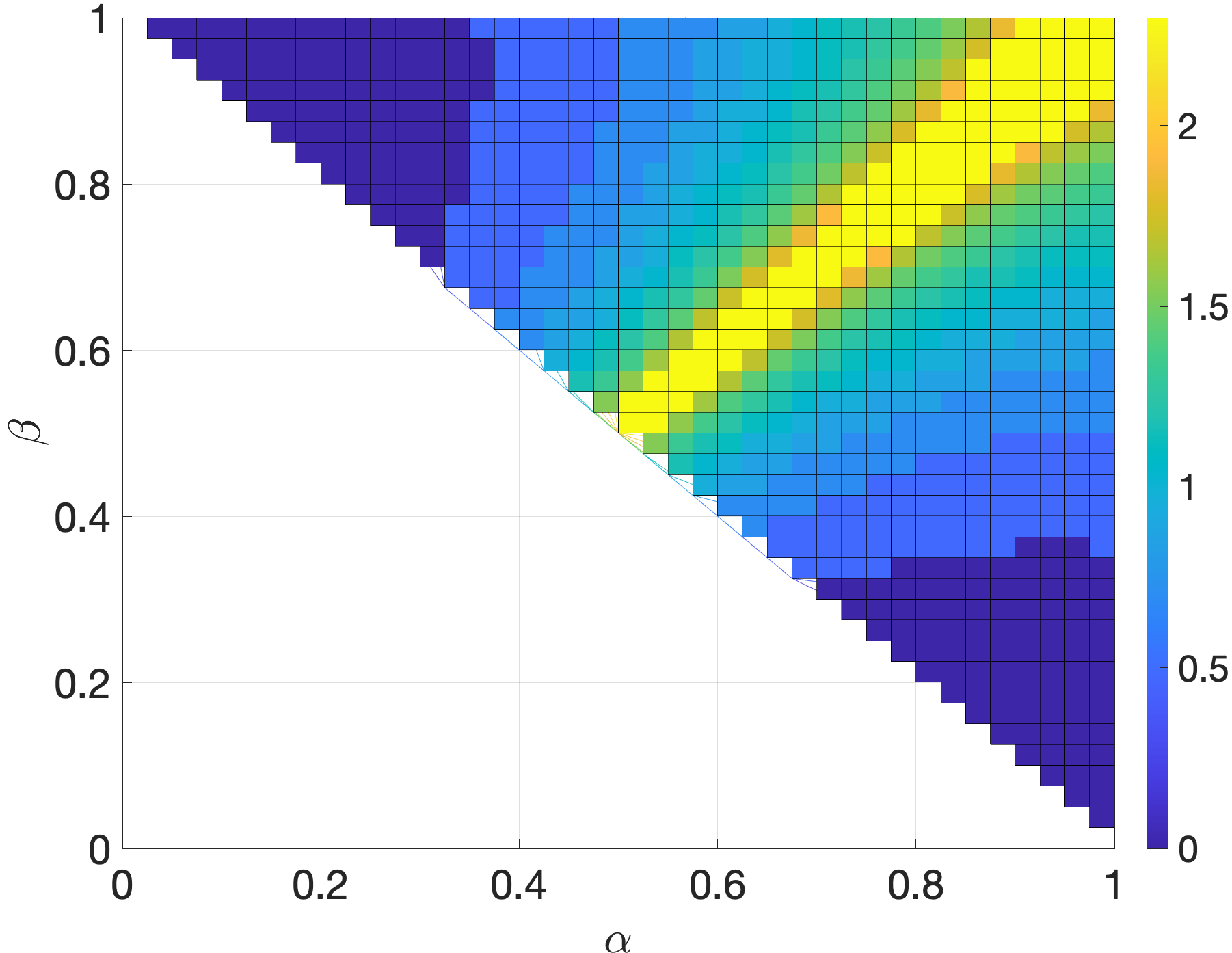} 
\caption{Number (in common logarithmic scale) of real eigenfunctions (or eigenvalues) with respect to $(\alpha, \beta) \in [0, 1] \times [0, 1]$ with $1< \alpha + \beta \le 2$ and a grid of size $40 \times 40.$ }
\label{fig:nev}
\end{figure}

%
%
%
%
%
%
%
%
%

\section{Conclusion}
In this work, we study various spectral properties of problem \eqref{eq:pde} involving both left- and right-sided Riemann-Liouville derivatives. This fractional diffusion operator is nonlocal and non-symmetric, which brings a challenge for doing analysis and requires developing new tools and framework. We find that all the possible eigenvalues are located at the core on the complex plane given by $|\Arg \lambda|\leq \frac{|\beta-\alpha|\pi }{2}$, and we also prove that there exists at least one real eigenvalue. All these results on the distribution of eigenvalues are confirmed in our numerical experiments. However, many other important questions remain open such as the simplicity of eigenvalues, the existence of complex eigenvalues with non-zero imaginary parts (numerically observed, but theoretical justification is missing), the finitude of real eigenfunctions, etc.

 Combing our current results and other elliptic results established in previous work \cite{YLisubmitted}, we see that the classic elliptic theory has its counterparts for problem \eqref{eq:pde}, however, exhibiting new interesting features and differences in the distribution of eigenvalues and the graphs of eigenfunctions. By constructing the suitable sesquilinear form, we can conduct numerical experiments with the finite element method, visualize the distribution of eigenvalues, and plot eigenfunctions from different angles. The numerical analysis supports our analytical results and, more importantly, suggests that there are only finitely many real eigenvalues except the symmetric case $\alpha=\beta$ and the number of real eigenpairs increase as either $\alpha+\beta$ increases or $\alpha$ and $\beta$ are getting closer (or both). These instructive numerical observations provide insights into the asymptotic behavior of the spectral problem \eqref{eq:pde} and deserve further theoretical exploration in the future.

\appendix 
\section{Definitions of fractional Riemann-Liouville integrals and derivatives and Poincar\'e-Friedrichs inequality}\label{appendix-sobolev}

We follow \cite{MR1347689} and list the definition of the fractional Riemann-Liouville integrals and derivatives as below. 

\begin{definition}\label{def:lrintegralderivative}
	Let $t\geq 0$ and  $n = \lceil t \rceil$ be the smallest integer such that $n> t$.  The left-sided Riemann-Liouville integral and derivative  are formally defined as
	\begin{equation*}
		(\li{t}u)(x):=\frac{1}{\Gamma(t)}\int_{a}^{x}\frac{u(t)dt}{(x-t)^{1-t}},\,\,\, 
		(\ld{t}u)(x):=\left(\frac{d}{dx}\right)^n \li{n-t}u,\quad x>a.
	\end{equation*}
Similarly, the right-sided Riemann-Liouville integral and derivative are defined as
	\begin{equation*}
		(\ri{t}u)(x):=\frac{1}{\Gamma(t)}\int_{x}^{b}\frac{u(t)dt}{(t-x)^{1-t}},\quad
		(\rd{t}u)(x):=\left(-\frac{d}{dx}\right)^n \ri{n-t}u,\quad x<b.
	\end{equation*}
 For the sake of notational convenience, if $a=-\infty$, we write 
	\[
	\lii{t}:={\mathcal{I}^t_{-\infty+}},\quad \ldd{t}:={\mathcal{D}^t_{-\infty+}},
	\]
	respectively; and if $b=+\infty$, write
	
\[
\rii{t}:={\mathcal{I}^t_{+\infty-}},\quad \rdd{t}:={\mathcal{D}^t_{+\infty-}},
\] 
 respectively.
\end{definition}

%
%

%
%
\begin{property}(Fractional  Poincar\'e-Friedrichs)\label{poincareinequality}
 Given $s>0$ and bounded interval $\Omega=(a,b)$, there holds
\[
\|u\|_{\LL}\leq \frac{(b-a)^s}{s\Gamma(s)} |u|_{\hsss}
\]
for any $u\in \hsss$.
\end{property}
\begin{proof}
	Given  $u\in \hsss$, by \Cref{pro:alternate-form}, $u$ can be represented as
	\[
\ld{s}u\in \LL	\quad \text{and}\quad u(x)=\li{s}\ld{s}u.
	\]
	By the boundedness of $\li{s}$ (eq. (2.72),  p. 48, \cite{MR1347689}), we have
	\[
	\|u\|_{\LL}\leq \frac{(b-a)^s}{s\Gamma(s)} \|\ld{s}u\|_{\LL} \leq 
	\frac{(b-a)^s}{s\Gamma(s)}|u|_{\hsss}.
	\]
\end{proof}

\bibliography{mybibfile}
\end{document}